\documentclass[reqno]{amsart}

\usepackage{amsmath,amssymb,amsfonts,amsthm}
\usepackage{microtype}
\usepackage[hidelinks]{hyperref}

\newtheorem{theorem}{Theorem}[section]
\newtheorem{lemma}[theorem]{Lemma}
\newtheorem{proposition}[theorem]{Proposition}

\newtheorem{remark}[theorem]{Remark}
\numberwithin{equation}{section}

\begin{document}

\title[Overdetermined problems for
sum Hessian equations]{Overdetermined problems for
sum Hessian equations in the space forms}

\author{Cheng Cheng}
\address{School of Mathematics and Statistics, Xinyang Normal University,
Xinyang 464000, P.R.China}
\email{chengcheng26@126.com}

\author{Yingbo Han}
\address{School of Mathematics and Statistics, Xinyang Normal University,
Xinyang 464000, P.R.China}
\email{yingbohan@163.com}

\subjclass[2020]{35J60, 35B06, 53C21}
\keywords{Overdetermined problem, sum Hessian equation, space form,
Rellich--Poho\v{z}aev identity}

\begin{abstract}
We study overdetermined problems for normalized sum Hessian equations and
their quotients in the space forms. The equations and boundary conditions imply
admissibility and ellipticity. For quotient equations, we prove that the
domain is a geodesic ball if it is strictly star-shaped, or if the
Dirichlet value satisfies an upper bound. We also prove the
corresponding result for sum Hessian equations without star-shapedness.
The proofs use two auxiliary functions and a Rellich--Poho\v{z}aev identity.
\end{abstract}

\maketitle

\section{Introduction}\label{sec:introduction}

Overdetermined boundary conditions relate elliptic equations to the
geometry of their domains. Serrin \cite{Serrin} proved that a bounded
domain admitting a solution of
$$
 \Delta u=-1\quad\hbox{in }\Omega,\qquad
 u=0,\quad u_\nu=\hbox{constant}\quad\hbox{on }\partial\Omega
$$
must be a ball. His proof used the moving plane method. Weinberger
\cite{Weinberger} gave a different proof based on a gradient auxiliary
function and an integral identity.

Several authorities have extended these arguments to fully nonlinear
equations. Brandolini, Nitsch, Salani and Trombetti \cite{Brandolini}
proved a Serrin-type theorem for the $k$-Hessian equation in Euclidean
space. Ma \cite{Ma} used an auxiliary function for a capillarity problem
for the two-dimensional Monge--Amp\`ere equation. Philippin and Safoui
\cite{PhilippinSafoui} obtained maximum principles for combinations of
solutions and their gradients for fully nonlinear equations. More
recently, Gavitone, Masiello, Paoli and Poggesi \cite{GavitoneEtAl}
studied symmetry and quantitative stability for Euclidean Hessian
overdetermined problems and their relation to higher order mean
curvatures.

The corresponding problems in space forms require curvature terms in
both the equation and the auxiliary functions. Molzon \cite{Molzon}
and Kumaresan and Prajapat \cite{KumaresanPrajapat} studied symmetry
results on spheres and hyperbolic spaces. Qiu and Xia \cite{QiuXia}
used two auxiliary functions for a spherical overdetermined problem.
Ciraolo and Vezzoni \cite{CiraoloVezzoni} gave a proof of Serrin's theorem
in space forms by integral methods. Gao, Jia and Yan \cite{GaoJiaYan}
consider  overdetermined problems of Hessian equation in hyperbolic space.
Subsequently, Gao, Ma and Yang \cite{GaoMaYang} considered Hessian quotient equations
with constant Dirichlet condition in the space forms. Their
Theorem~1.1 proves symmetry under a star-shapedness assumption. Their
Theorem~1.4 consider the $k$-Hessian equation without this assumption,
subject to a bound on the Dirichlet value when $k\geq2$. Their proofs
use two auxiliary functions and a Rellich--Poho\v{z}aev identity. We
extend these arguments to adjacent sum Hessian operators. Related results are obtained by Gao-Jia-Zhang \cite{GaoJiaZhang}.

On the other hand, Li, Ren and Wang \cite{LiRenWang} studied curvature estimates
for convex solutions of Hessian-type equations defined by
linear combinations of elementary symmetric functions.
Liu and Ren \cite{LiuRen} established Pogorelov-type estimates
for sum Hessian equations and applied them to prove a rigidity
theorem for entire solutions. Li and Ren \cite{LiRen} obtained
further Pogorelov-type estimates for admissible solutions under
a lower bound on $\sigma_k(D^2u)$. They removed this lower bound
when $k=n$. Ren and Wang \cite{RenWangSumQuotient} studied
interior second derivative estimates for sum Hessian quotient
equations. These works concern a priori estimates and rigidity
of entire solutions.

Motivated by the above results, in this paper we extend the overdetermined problem to the setting of sum Hessian and sum Hessian quotient equations.

Let $(M^n(K),g)$ be the complete simply connected space form of
sectional curvature $K$, where $n\geq2$. Throughout this paper,
$\Omega\subset M^n(K)$ is a bounded connected domain with $C^2$
boundary, and $\nu$ is its outward unit normal. If $K>0$, we assume
that $\overline\Omega$ lies in an open hemisphere.

For a symmetric endomorphism $A$, let $\sigma_j(A)$ denote the $j$-th
elementary symmetric function of its eigenvalues. We write
\begin{equation}\label{eq:normalized}
 E_j(A)=\binom nj^{-1}\sigma_j(A),\qquad 0\leq j\leq n,
\end{equation}
where $E_0=1$, and set $E_j=0$ for $j<0$ or $j>n$. For a fixed
constant $\alpha\geq0$, define
\begin{equation}\label{eq:G-definition}
 G_j(A)=E_j(A)+\alpha E_{j-1}(A),\qquad 1\leq j\leq n,
 \qquad G_0=1.
\end{equation}
In particular, $G_j(I)=1+\alpha$ for every $j\geq1$.

%Sum Hessian equations have been studied by Liu and Ren \cite{LiuRen},
%who obtained Pogorelov estimates and an entire-solution rigidity
%theorem. Ren \cite{Ren} proved Newton--Maclaurin inequalities for
%the normalized adjacent sums in \eqref{eq:G-definition}.
%Hu, Ren and Wang \cite{HuRenWang} obtained related inequalities for
%longer linear combinations. In this paper, the normalization of each
%$E_j$ allows us to compare different orders with the same parameter
%$\alpha$.

For a function $u$, we use the symmetric tensor
\begin{equation}\label{eq:b-definition}
 b=\nabla^2u+Kug.
\end{equation}
The metric identifies $b$ with a symmetric endomorphism when we
evaluate $E_j$ or $G_j$. Let $c_1\in\mathbb R$ and $c_2>0$ be
constants. Our first problem is
\begin{equation}\label{eq:quotient-problem}
 \begin{cases}
 \displaystyle\frac{G_k(b)}{G_l(b)}=1&\text{in }\Omega,\\[5pt]
 u=Kc_1&\text{on }\partial\Omega,\\
 u_\nu=c_2&\text{on }\partial\Omega,\\
 Ku\geq0&\text{in }\Omega,
 \end{cases}
 \qquad 1\leq l<k\leq n.
\end{equation}
A solution is understood to satisfy $G_l(b)\ne0$ in $\Omega$. We obtain the following result.

\begin{theorem}\label{thm:star-quotient}
Let $1\leq l<k\leq n$, and let $u\in C^3(\overline\Omega)$ solve
\eqref{eq:quotient-problem}. Assume that $\Omega$ is strictly
star-shaped with respect to a point $o$. If $K>0$, assume also that
$\overline\Omega$ lies in the open hemisphere centered at $o$.
Then
 $\Omega$ is a geodesic ball and $u$ is radial with respect
to its center.
\end{theorem}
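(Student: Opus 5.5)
We follow the scheme of Gao, Ma and Yang for Hessian quotients in space forms, adapting each step to the sum operators $G_j$. Throughout, let $r$ be the geodesic distance to $o$ and let $\phi'(r)$ be the cosine-type function ($1$, $\cos\sqrt K r$, or $\cosh\sqrt{-K}r$). The vector field $X=\phi(r)\partial_r$ is then closed conformal: $\nabla X=\phi' g$. Set $V=\phi'$, which satisfies $\nabla^2V=-KVg$.

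\textbf{Step 1: admissibility and ellipticity.} We first show that $b$ lies in the $G_k$-admissible cone, so that $G_k/G_l$ is elliptic and concave there. At a point of $\partial\Omega$ where the distance from a fixed point is maximal, the boundary conditions $u=Kc_1$, $u_\nu=c_2>0$ force $b$ to dominate a positive multiple of $g$, since $b=u_\nu\,\mathrm{II}$ tangentially plus a normal part. Hence $b$ is in the positive cone there, and by continuity (using $G_l\neq 0$ and the equation) $b\in\Gamma_k$ throughout $\Omega$. For this cone we will use the Newton--Maclaurin type inequalities for $G_j$ (Ren's adjacent-sum inequalities): $G_k/G_l\le (G_{l+1}/G_l)^{k-l}$ type bounds and $G_{j}^{\,}/G_{j-1}\ge$ comparable quantities, with equality iff $b$ is a multiple of $g$.

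\textbf{Step 2: a P-function.} We next introduce a Weinberger-type auxiliary function, e.g. $P=|\nabla u|^2-\frac{2}{n}(\ldots)u-Ku^2$, suitably normalized so that for $b=\lambda g$ solving $G_k(\lambda I)/G_l(\lambda I)=1$ it is constant. For the sum operators, $G_k(\lambda I)/G_l(\lambda I)=\lambda^{k-l}(\lambda+\alpha)/(\lambda+\alpha)$, so $\lambda=1$ and $b=g$ is the model. Using the linearized operator $L=\partial G_k/\partial b_{ij}\,\nabla_{ij}-\partial G_l/\partial b_{ij}\,\nabla_{ij}$ (or the quotient's linearization) and the concavity from Step 1, we show $P$ satisfies a maximum principle. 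It therefore attains its maximum on $\partial\Omega$, where it is constant, giving $P\le c_2^2+\dots$ in $\Omega$.

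\textbf{Step 3: Rellich--Poho\v{z}aev identity and star-shapedness.} We apply a Poho\v{z}aev identity obtained by integrating divergence identities involving the Newton tensors $T_{j-1}(b)$ of $\sigma_j$ against $X$ and $V$, using $\mathrm{div}\,T_{j-1}(b)=$ curvature terms that vanish or are explicit in space forms. Combining the identities for $j=k,k-1,l,l-1$ with weights $1,\alpha$ gives an integral identity expressing $\int_\Omega$ of a combination of $G$'s in terms of $c_2^2\int_{\partial\Omega}\langle X,\nu\rangle$. Strict star-shapedness gives $\langle X,\nu\rangle>0$, and the hemisphere assumption gives $V>0$; together these fix signs, so that the boundary integral can be compared with the bound on $P$ from Step 2. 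The resulting inequality runs opposite to the Newton--Maclaurin inequality unless equality holds. Hence $b=g$ in $\Omega$, the umbilic $\nabla^2u=(1-Ku)g$ forces $u$ to be radial about some point, and the level set $\partial\Omega$ is a geodesic sphere.

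\textbf{Main obstacle.} The hardest step is the sum structure in Steps 2--3. The identities for $E_k$ and $E_{k-1}$ carry different dimensional constants, and it is not obvious that the terms combine with the single weight $\alpha$. We expect to need Ren's inequality $G_{k}G_{l-1}\le G_{k-1}G_l$ within $\Gamma_k$ to close the chain, and we would first verify it for $b=\lambda g$ and then use it to absorb the mixed terms in the P-function computation.
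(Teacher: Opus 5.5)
Your proposal has a genuine gap: Step 3, which has to carry the proof, has no working mechanism, and Step 1 does not establish admissibility.

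\textbf{Step 3.} In the Rellich--Poho\v{z}aev identity for this problem (the paper's \eqref{eq:Pohozaev-P}), the divergence theorem turns every boundary integral into a signed interior integral, with no star-shapedness needed. For example, $c_2^2\int_{\partial\Omega}H^{ij}\Phi_i\nu_j\,dS=c_2^2\int_\Omega H^{ij}\delta_{ij}V\,d\mu$, using $\nabla_jH^{ij}=0$ and $\nabla^2\Phi=Vg$. The only unsigned quantity is the interior factor $Kc_1-u$ in $\int_\Omega(Kc_1-u)(H^{ij}\delta_{ij}-H^{ij}b_{ij})V\,d\mu$, and star-shapedness says nothing about it. The paper controls that factor only in Theorem~\ref{thm:nonstar-quotient}, where $K^2c_1\le1$ yields $u<Kc_1$. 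So ``star-shapedness fixes the signs'' does not give an inequality contradicting Newton--Maclaurin.

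The missing idea is the second auxiliary function $\widetilde P=-\langle\nabla u,\nabla\Phi\rangle+uV+\Phi$. Since $\nabla\widetilde P=(g-b)\nabla\Phi$, one gets $F^{ij}\nabla^2_{ij}\widetilde P=V(F^{ij}\delta_{ij}-F^{ij}b_{ij})\ge V(1+\alpha)(E_{k-1}-E_{l-1})/G\ge0$. Equality holds only if $b=g$, by Lemma~\ref{lem:quotient-algebra}. The bound $P\le P_0$ is used only through its boundary consequence $P_\nu=2c_2(b_{\nu\nu}-1)\ge0$. Star-shapedness is used only through $\widetilde P_\nu=-(b_{\nu\nu}-1)\Phi_\nu\le0$. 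The Hopf lemma then forces $\widetilde P$ to be constant, hence $b=g$.

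An integral version also works. Integrate $\nabla_j(H^{ij}\nabla_i\widetilde P)$, using that $\nu$ is an eigenvector of $b$, hence of $H$, on $\partial\Omega$. This gives $\int_\Omega VH^{ij}(\delta_{ij}-b_{ij})\,d\mu=-\int_{\partial\Omega}H^{\nu\nu}(b_{\nu\nu}-1)\Phi_\nu\,dS$, whose two sides have opposite signs. That is a Minkowski-type identity for $\widetilde P$, not the Poho\v{z}aev identity you describe.

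Both versions rest on the pointwise trace gap above, which your proposal never isolates. Likewise, Step 2 is not driven by ``concavity'', which is unproved for $G_k/G_l$ and not needed. The real input is the exact bound $\frac12F^{ij}\nabla^2_{ij}P\ge(1+\alpha)(\alpha+Ku)(E_{k-1}-E_{l-1})/G$. It follows from the Newton identities for $G_m$ and $Ku\ge0$, for $P=|\nabla u|^2+Ku^2-2u$. With $-Ku^2$, as you wrote, $P$ is not constant on the model $b=g$ when $K\neq0$.

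\textbf{Step 1.} At a farthest boundary point you control only the tangential block $c_2\,\mathrm{II}+K^2c_1g$ of $b$. The entry $b_{\nu\nu}$ is unconstrained by the boundary data, and the quotient equation alone does not exclude $G_k=G_l<0$ there. For instance, with $n=2$, $\alpha=0$, $k=2$, $l=1$, eigenvalues $1/4$ (tangential) and $-1/2$ (normal) give $E_2=E_1=-1/8$. Moreover, the correct cone is $\widetilde\Gamma_k$, which lies in $\Gamma_{k-1}$ but not in $\Gamma_k$. Your continuity step also needs a uniform lower bound to make the admissible set closed.

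The paper argues differently. Since $u_\nu=c_2>0$, $u$ has an interior minimum, where $b=\nabla^2u+Kug\ge0$ because $Ku\ge0$. There $G_l>0$, hence $G_k=G_l>0$. The set $\{b\in\widetilde\Gamma_k\}$ is then closed in $\overline\Omega$ because $G_j\ge G_k^{j/k}\ge1$ on it.
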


The following theorem replaces star-shapedness by a condition on the
Dirichlet value.

\begin{theorem}\label{thm:nonstar-quotient}
Let $1\leq l<k\leq n$, and let $u\in C^3(\overline\Omega)$ solve
\eqref{eq:quotient-problem}. If $K^2c_1\leq1$, then
 $\Omega$ is a geodesic ball and $u$ is radial with respect
to its center.
\end{theorem}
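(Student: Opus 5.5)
We follow the scheme of Gao, Ma and Yang (their Theorem~1.4), with the Hessian operators replaced by the sums $G_j$. We use the conformal Killing field $X=\phi'(r)\partial_r$, where $r$ is the distance to a fixed point and $\phi'' = -K\phi$. We also use the potential $V=\phi'$, which satisfies $\nabla^2 V=-KVg$.

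\emph{Step 1: admissibility and ellipticity.} The boundary conditions $u=Kc_1$ and $u_\nu=c_2>0$, together with the Hopf-type comparison at a point of $\partial\Omega$ touched by an interior geodesic sphere, give a point where $b$ is positive definite. Since $G_k/G_l=1$ and $G_l\neq0$ in the connected domain $\Omega$, the eigenvalue vector of $b$ stays in the cone $\Gamma$ where $G_l>0$ and $G_k>0$. On this cone the quotient $G_k/G_l$ is elliptic and concave. We then use the Newton--Maclaurin inequalities for the normalized adjacent sums $G_j$. These give $G_k/G_l\le (G_1/(1+\alpha))^{k-l}\cdot$const, and more precisely
\[
\frac{G_1}{1+\alpha}\ \ge\ \Big(\frac{G_k}{G_l}\Big)^{1/(k-l)}=1,
\]
with equality if and only if $b$ is a multiple of $g$. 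Let $T^{ij}$ denote the linearized operator of $G_k/G_l$.

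\emph{Step 2: the P-function.} Set
\[
P=|\nabla u|^2+K u^2-\frac{2}{n}\,u\,\Delta u\text{-type}
\]
combination. Concretely, we take $P=|\nabla u|^2 + Ku^2 - 2\lambda u$ with the appropriate constant $\lambda$ ($\lambda$ plays the role of $1$ after normalization, since the equation gives $\Delta u+nKu\ge n$). This $P$ satisfies $T^{ij}P_{ij}\ge$ (gradient terms) in $\Omega$. Hence it attains its maximum on $\partial\Omega$, where $P\equiv c_2^2+K^3c_1^2-2\lambda Kc_1$ is constant. This gives $P\le$ const in $\Omega$. The hypothesis $K^2c_1\le1$ (equivalently $Ku\ge0$ combined with the bound on $u$) is exactly what makes the zeroth-order curvature terms in $T^{ij}P_{ij}$ have a sign. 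It also controls the sign of $\partial_\nu P$ on the boundary.

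\emph{Step 3: Rellich--Poho\v{z}aev identity.} Next we integrate the divergence-form identities
\[
\nabla_i\big(\sigma_j^{ik}(b)\,u_k\big)=\text{curvature terms},
\]
valid in space forms for each $j$, against $V$ and $u$. Summed over $G_k$ and $G_l$, they give two integral expressions for $\int_\Omega V G_j\,u$-type quantities in terms of $c_2$, $|\partial\Omega|$ and $\int_{\partial\Omega} V$. The Minkowski-type formula $\int_{\partial\Omega}\langle X,\nu\rangle=n\int_\Omega V$ then yields an identity of the form
\[
\int_\Omega V\Big(\frac{G_1}{1+\alpha}-1\Big)\,(\cdots)=\int_\Omega V\,(P_{\max}-P)\,(\cdots).
\]
Combined with Step 2 and the Newton--Maclaurin inequality from Step 1, this forces both sides to vanish. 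Hence $P$ is constant and equality holds in Newton--Maclaurin, so $b=\mu g$ for some function $\mu$.

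\emph{Conclusion.} Since $b=\nabla^2u+Kug=\mu g$, the equation forces $\mu$ to be constant. Then $\nabla u$ is a conformal gradient field, so $u$ is a function of distance to a point. It follows that $\Omega$ is a geodesic ball and $u$ is radial.

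The main obstacle is Step 2 together with matching constants in Step 3. First, the extra $\alpha E_{j-1}$ terms must be absorbed in the maximum-principle computation, which requires $\sum T^{ij}g_{ij}$ and $T^{ij}b_{ij}$ identities for $G_k/G_l$. Second, the bound $K^2c_1\le1$ must be shown to suffice for the sign of the curvature term. If this fails, we would try instead a second auxiliary function $Q=V^{-1}(\ldots)$, in the manner of Qiu and Xia.
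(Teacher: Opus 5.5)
Your outline has the right ingredients (the function $P=|\nabla u|^2+Ku^2-2u$, a Rellich--Poho\v{z}aev identity, an algebraic equality case). However, the step that closes the argument is missing, and you attach the hypothesis $K^2c_1\le1$ to the wrong place.

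The subsolution property of $P$ needs only $Ku\ge0$, which is part of the problem. After Codazzi and the differentiated equation, $\frac12F^{ij}\nabla^2_{ij}P$ is bounded below by $(1+\alpha)(\alpha+Ku)(E_{k-1}-E_{l-1})/G\ge0$. Also, $\partial_\nu P$ plays no role in this theorem.

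What $K^2c_1\le1$ actually controls is the sign of the weight in the integral identity. Take $H^{ij}=G_k^{ij}-G_l^{ij}$, which is divergence free and satisfies $H^{ij}\nabla_mb_{ij}=0$ by the equation. Turn the boundary integrals back into volume integrals via $\int_{\partial\Omega}H^{ij}\Phi_i\nu_j=\int_\Omega H^{ij}\delta_{ij}V$ and $\int_{\partial\Omega}H^{ij}b_{il}\Phi_l\nu_j=\int_\Omega H^{ij}b_{ij}V$. A classical Minkowski formula does not suffice here: the boundary terms involve $H(\nu,\nu)$, i.e.\ curvature functions of $\partial\Omega$, not just $|\partial\Omega|$ and $\int_{\partial\Omega}V$. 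One obtains
\[
\int_\Omega (Kc_1-u)\bigl(H^{ij}\delta_{ij}-H^{ij}b_{ij}\bigr)V\,d\mu
=\frac12\int_\Omega (P-P_0)\,H^{ij}\delta_{ij}\,V\,d\mu .
\]
The right-hand side is $\le0$ because $P\le P_0$. The left-hand side is $\ge0$ only once you know $u\le Kc_1$ in $\Omega$, and this is exactly where $K^2c_1\le1$ enters. Since $E_1(b)\ge1\ge K^2c_1$, the function $w=(u-Kc_1)/V$ satisfies $\Delta w+\frac2V\langle\nabla V,\nabla w\rangle=n(E_1-K^2c_1)/V\ge0$. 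This operator has no zeroth-order term and $w=0$ on $\partial\Omega$, so $u<Kc_1$ in $\Omega$.

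Your proposal never establishes this sign. The identity you display, $\int V(\frac{G_1}{1+\alpha}-1)(\cdots)=\int V(P_{\max}-P)(\cdots)$, has both integrands nonnegative. It therefore forces nothing to vanish unless the unspecified weights have opposite signs, which is precisely the missing information.

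Second, the quantity that detects rigidity is the trace gap $H^{ij}\delta_{ij}-H^{ij}b_{ij}$ produced by the identity, not $G_1/(1+\alpha)-1$. You need the following algebraic estimate. From the non-increasing ratios $G_j/G_{j-1}$ (Ren's inequality) and $G_k=G_l=G$, one gets $G_{k-1}\ge G\ge G_{l-1}$. Hence $H^{ij}\delta_{ij}-H^{ij}b_{ij}\ge(1+\alpha)(E_{k-1}-E_{l-1})\ge0$, and $E_{k-1}=E_{l-1}$ holds only if $b=g$ (Newton--Maclaurin equality in $\Gamma_k$). Vanishing of the left-hand side, together with $Kc_1-u>0$, then gives $b=g$ everywhere.

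There are smaller issues in Step 1. An interior minimum of $u$ exists because $u_\nu=c_2>0$, but it gives only $b\ge0$ there, not positive definiteness. The admissible set must be $\widetilde\Gamma_k$ (all $G_j>0$ for $j\le k$), reached by an open--closed argument using $G_j\ge G_k^{j/k}\ge1$; the set where $G_k,G_l>0$ does not by itself give ellipticity.
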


We also consider the sum Hessian problem
\begin{equation}\label{eq:sum-problem}
 \begin{cases}
 G_k(b)=1+\alpha&\text{in }\Omega,\\
 u=Kc_1&\text{on }\partial\Omega,\\
 u_\nu=c_2&\text{on }\partial\Omega,\\
 Ku\geq0&\text{in }\Omega,
 \end{cases}
 \qquad 1\leq k\leq n.
\end{equation}

We obtain the following result.

\begin{theorem}\label{thm:sum}
Let $1\leq k\leq n$, and let $u\in C^3(\overline\Omega)$ solve
\eqref{eq:sum-problem}. If $k\geq2$, assume that $K^2c_1\leq1$.
Then
 $\Omega$ is a geodesic ball and $u$ is radial with respect
to its center.
\end{theorem}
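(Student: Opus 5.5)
We follow the scheme of Gao--Ma--Yang, adapted to $G_k=E_k+\alpha E_{k-1}$. Throughout we use the conformal vector field $X=\phi'(r)\partial_r$ centered at a point $o$, where $\phi=r,\sin r,\sinh r$ according to the sign of $K$, and the potential $V=\phi'$, so that $\nabla^2V=-KVg$ and $\nabla X=Vg$.

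\emph{Step 1: ellipticity and the $P$-function.} First we show that $b$ lies in the G\aa rding cone $\Gamma_k$ and that $G_k$ is elliptic at $b$. At a boundary point where the boundary attains a maximum of the distance from some point, the condition $u_\nu=c_2>0$ together with $u=Kc_1$ gives $b=u_\nu\,\mathrm{II}+(\cdots)$ tangentially, and this is positive there. Connectedness, together with $G_k=1+\alpha>0$, keeps $b$ in the cone along the whole domain. Next, using the Newton--Maclaurin inequalities for the normalized sums (Ren),
\[
\frac{G_k}{G_{k-1}}\le \frac{G_{k-1}}{G_{k-2}}\le\cdots\le \frac{G_1}{G_0},
\]
we take the auxiliary function
\[
P=|\nabla u|^2+K u^2-\frac{2}{n}u-\cdots,
\]
normalized so that $P$ is constant for radial solutions. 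We show that $P$ is a subsolution of the linearized operator $L=G_k^{ij}\nabla_{ij}$ plus lower-order terms. This uses the standard computation $b_{ij,l}$ symmetric (Codazzi in a space form), Cauchy--Schwarz, $Ku\ge0$, and the concavity of $(G_k)^{1/k}$. The strong maximum principle then gives either $P$ constant or $P<\max_{\partial\Omega}P=c_2^2+K^3c_1^2-\cdots$ in $\Omega$.

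\emph{Step 2: the second auxiliary function and the integral identity.} When $k\ge2$ the term $Ku$ in $b$ enters $P$ nonlinearly. Following Gao--Ma--Yang, we therefore use a second function, essentially $Q=\langle\nabla u,X\rangle$ combined with $uV$, or the variant $P$ with $u$ replaced by $u-Kc_1$. The hypothesis $K^2c_1\le1$ is what guarantees the sign of the extra curvature term in $L P$ (for $k=1$ this term has no bad sign, so no hypothesis is needed). We then derive a Rellich--Poho\v zaev identity. Multiply $G_k(b)=1+\alpha$ by $\langle X,\nabla u\rangle$ and by $V(u-Kc_1)$, integrate, and use the divergence-free property of the Newton tensors $T_{j-1}(b)$ in space forms, which holds for each $E_j$ separately and hence for $G_k$. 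With the boundary data this yields
\[
\int_\Omega V\big(c_2^2-P\big)\,\cdot(\text{positive weight})=0 .
\]
The weight combines the $E_{k-1}$ and $E_{k-2}$ parts. Together with the strict inequality from Step~1, this forces $P$ to be constant.

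\emph{Step 3: rigidity.} If $P$ is constant, all the inequalities used in Step~1 are equalities. In particular the Newton--Maclaurin inequality is an equality, so $b$ is umbilic: $\nabla^2u+Kug=\lambda g$ with $\lambda$ constant, determined by $G_k(\lambda I)=1+\alpha$, i.e.\ $\lambda=1$. The standard Obata/Reilly argument then shows that $u$ is a radial function of the distance to a critical point, and that $\Omega$, as a level set, is a geodesic ball.

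The main obstacle is Step~1 with Step~2 combined: checking that $LP\ge0$ for the mixed operator $G_k$. The cross terms between $E_k^{ij}$ and $\alpha E_{k-1}^{ij}$ must be controlled. For this we would use the inequality $\sum G_k^{ii}\ge c\,G_{k-1}$ type and
\[
G_k^{ij}b_{ij}\cdot G_1\ \ge\ \text{(combination)}
\]
coming from the adjacent-sum Newton--Maclaurin inequality. We would also check that the $K^2c_1\le1$ condition is exactly what absorbs the curvature terms.
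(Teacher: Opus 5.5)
Your proposal has a genuine gap in Step~2, which is exactly where the hypothesis $K^2c_1\le1$ matters. For $k\ge2$ the Rellich--Poho\v{z}aev identity does not reduce to $\int_\Omega V(P_0-P)\cdot(\text{weight})\,d\mu=0$. With $H^{ij}=G_k^{ij}$, $P=|\nabla u|^2+Ku^2-2u$ and $P_0=c_2^2+K^3c_1^2-2Kc_1$, it reads
\[
\int_\Omega (Kc_1-u)\bigl(G_k^{ij}\delta_{ij}-G_k^{ij}b_{ij}\bigr)V\,d\mu
=\frac12\int_\Omega (P-P_0)\,G_k^{ij}\delta_{ij}\,V\,d\mu .
\]
The left side vanishes identically only when $k=1$, since then $G_1^{ij}\delta_{ij}=G_1^{ij}b_{ij}=E_1=1$. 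For $k\ge2$ you must show the left side is nonnegative, and this needs two ingredients your plan lacks.

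The first is the trace gap
\[
G_k^{ij}\delta_{ij}-G_k^{ij}b_{ij}=(k+\alpha)(E_{k-1}-1)+\alpha(k-1)(E_{k-2}-1)\ge0 .
\]
It follows from $E_{k-1}\ge1$ and $E_{k-2}\ge1$, which you obtain from $E_k=1-\alpha(E_{k-1}-1)$ and Maclaurin in $\Gamma_{k-1}$.

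The second is the sign $u<Kc_1$ in $\Omega$, and this is where $K^2c_1\le1$ is used. The function $w=(u-Kc_1)/V$ satisfies
\[
\Delta w+\tfrac2V\langle\nabla V,\nabla w\rangle=n(E_1-K^2c_1)/V\ge0
\]
because $E_1\ge1$. Also $w=0$ and $w_\nu=c_2/V>0$ on $\partial\Omega$, so the strong maximum principle gives $w<0$.

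You instead assign $K^2c_1\le1$ to absorbing a curvature term in $LP$. In fact $G_k^{ij}\nabla^2_{ij}P\ge0$ holds with no condition on $c_1$. The $\nabla b$ term is killed by the differentiated equation, so no concavity of $G_k^{1/k}$ is needed. What remains is
\[
\bigl(G_k^{ij}b_{mi}b_{mj}-G_k^{ij}b_{ij}\bigr)+Ku\bigl(G_k^{ij}\delta_{ij}-G_k^{ij}b_{ij}\bigr),
\]
which is handled by Newton's inequality $E_{k+1}\le E_k^2/E_{k-1}$, by $E_1\ge1$ and $E_{k-1}\ge1$, and by $Ku\ge0$. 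With the hypothesis placed where you put it, nothing controls the sign of the extra Poho\v{z}aev term, and the argument does not close. The second function ($\langle\nabla u,X\rangle$ combined with $uV$) plays no role in this theorem; it is the tool for the star-shaped case.

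Some smaller corrections. In Step~1 you cannot assert $b\in\Gamma_k$ a priori when $\alpha>0$, since $E_k=1-\alpha(E_{k-1}-1)$ may be negative. The continuity argument only gives $b\in\widetilde\Gamma_k\subset\Gamma_{k-1}$. That suffices, because ellipticity of $G_k^{ij}$ there and Newton's inequality for arbitrary real eigenvalues are all that is used, but no step may rely on $E_k>0$.

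Also, with the normalization $G_k(I)=1+\alpha$ the $P$-function is $|\nabla u|^2+Ku^2-2u$, not one with coefficient $2/n$. Its boundary value is $P_0$, not $c_2^2$. Your Step~3 (equality forces $E_1=1$, hence $b=g$, then rigidity) is consistent with what is needed once $P\equiv P_0$ is established.
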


\begin{remark}
\begin{enumerate}
\item When $k=1$, the additional restriction on $c_1$ is not required.
\item When $\alpha=0$, Theorems~\ref{thm:star-quotient} and \ref{thm:sum}
recover the corresponding results of Gao, Ma and Yang
\cite[Theorems~1.1 and 1.4]{GaoMaYang}.
\item When $\alpha=0$, Theorem~\ref{thm:nonstar-quotient} gives a non-star-shaped
Hessian quotient result under $K^2c_1\leq1$, whereas the quotient
theorem in \cite{GaoMaYang} assumes star-shapedness.
For $K=0$, the condition $K^2c_1\leq1$ holds automatically.
\end{enumerate}
\end{remark}

Section~\ref{sec:preliminaries} contains the space-form identities and
the symmetric-function inequalities used below.
Section~\ref{sec:ellipticity} proves admissibility, ellipticity and
the algebraic estimates for the quotient.
In Section~\ref{sec:auxiliary}, we study the two auxiliary functions
and prove Theorem~\ref{thm:star-quotient}.
Section~\ref{sec:integral} establishes the integral identity and
proves Theorems~\ref{thm:nonstar-quotient} and \ref{thm:sum}.

\section{Preliminaries}\label{sec:preliminaries}

We collect the identities for space forms and normalized elementary
symmetric functions. We also recall the Newton--Maclaurin
inequalities and the divergence identity for the tensor
$b=\nabla^2u+Kug$ ( see \cite{GaoMaYang} for details). All tensor calculations use local orthonormal
frames, and repeated indices are summed.

Fix $o\in M^n(K)$ and put $r=d(o,\cdot)$. If $K>0$, choose $o$ so
that $\overline\Omega$ lies in the open hemisphere centered at $o$.
Set
$$
 f(r)=
 \begin{cases}
 r,&K=0,\\
 \dfrac{\sin(\sqrt K r)}{\sqrt K},&K>0,\\[4pt]
 \dfrac{\sinh(\sqrt{-K}r)}{\sqrt{-K}},&K<0,
 \end{cases}
 \qquad V(r)=f'(r).
$$
We use the potential
$$
 \Phi(r)=
 \begin{cases}
 \dfrac12r^2,&K=0,\\[3pt]
 -\dfrac{\cos(\sqrt K r)}K,&K>0,\\[5pt]
 \dfrac{\cosh(\sqrt{-K}r)}{-K},&K<0.
 \end{cases}
$$
These functions are smooth at $o$. The warped product expression
for the space-form metric gives
\begin{equation}\label{eq:space-form-identities}
 \nabla\Phi=f(r)\partial_r,\qquad
 \nabla^2\Phi=Vg,\qquad
 \nabla V=-K\nabla\Phi,\qquad
 \nabla^2V=-KVg.
\end{equation}
These are the identities used in
\cite[Proposition~2.1]{GaoMaYang}. Our hemisphere assumption implies
$V>0$ on $\overline\Omega$. Strict star-shapedness with respect to
$o$ means
\begin{equation}\label{eq:star-shaped}
 \Phi_\nu=f(r)\langle\partial_r,\nu\rangle>0
 \quad\text{on }\partial\Omega.
\end{equation}

For any $v\in C^2(\overline\Omega)$, the product rule and
$\Delta V=-nKV$ yield
\begin{equation}\label{eq:positive-transform}
 \Delta\!\left(\frac vV\right)
 +\frac2V\left\langle\nabla V,\nabla\!\left(\frac vV\right)\right\rangle
 =\frac{\Delta v+nKv}{V}.
\end{equation}
The operator on the left has no zeroth-order term. We shall apply
the maximum principle and the Hopf boundary point lemma in
\cite[Chapter~3]{GilbargTrudinger}. The $C^2$ boundary satisfies
the interior sphere condition.

\subsection{Hessian and sum Hessian operator}

For a symmetric matrix $A=(a_{ij})$, write
$$
 E_m^{ij}(A)=\frac{\partial E_m}{\partial a_{ij}}(A),
 \qquad
 G_m^{ij}(A)=\frac{\partial G_m}{\partial a_{ij}}(A).
$$
The normalized Newton identities are
\begin{align}
 E_m^{ij}a_{ij}&=mE_m,\label{eq:Euler-E}\\
 E_m^{ij}\delta_{ij}&=mE_{m-1},\label{eq:trace-E}\\
 E_m^{ij}a_{pi}a_{pj}
 &=nE_1E_m-(n-m)E_{m+1}.
 \label{eq:square-E}
\end{align}
They follow from the corresponding identities for $\sigma_m$ in
\cite[Proposition~1.2]{Reilly}, after division by $\binom nm$.
Since $G_m^{ij}=E_m^{ij}+\alpha E_{m-1}^{ij}$, we obtain
\begin{align}
 G_m^{ij}a_{ij}
 &=mG_m-\alpha E_{m-1},\label{eq:Euler-G}\\
 G_m^{ij}\delta_{ij}
 &=mG_{m-1}-\alpha E_{m-2},\label{eq:trace-G}\\
 G_m^{ij}a_{pi}a_{pj}
 &=nE_1G_m-(n-m)G_{m+1}-\alpha E_m.
 \label{eq:square-G}
\end{align}
When $m=n$, the term $(n-m)G_{m+1}$ is absent.

We use the G{\aa}rding cones
$$
 \Gamma_m=\{A:E_j(A)>0,\ 1\leq j\leq m\}
$$
and the open sets
\begin{equation}\label{eq:admissible-set}
 \widetilde\Gamma_m=\{A:G_j(A)>0,\ 1\leq j\leq m\}.
\end{equation}
Newton's inequality
\begin{equation}\label{eq:Newton}
 E_j^2\geq E_{j-1}E_{j+1},\qquad 1\leq j\leq n-1,
\end{equation}
holds for every real eigenvalue vector; see \cite[Section~1]{Ren}.
On $\Gamma_m$, the Newton--Maclaurin inequalities give
\begin{equation}\label{eq:Maclaurin}
 E_1\geq E_2^{1/2}\geq\cdots\geq E_m^{1/m},
 \qquad
 \frac{E_1}{E_0}\geq\frac{E_2}{E_1}
 \geq\cdots\geq\frac{E_m}{E_{m-1}}.
\end{equation}
Equality between two distinct orders in either chain forces all
eigenvalues to be equal. We use this equality statement only when the
quantities involved are positive; see
\cite[Proposition~2.4]{GaoMaYang}.

Ren's inequality \cite[Theorem~1.2]{Ren} states that
\begin{equation}\label{eq:Ren}
 G_j^2\geq G_{j-1}G_{j+1},\qquad 2\leq j\leq n-1.
\end{equation}
It holds for all real eigenvalues and all real $\alpha$. At the
first index, our convention $G_0=1$ gives
\begin{equation}\label{eq:first-Newton}
 G_1^2-G_2=E_1^2-E_2+\alpha G_1\geq0
 \quad\text{if }G_1\geq0.
\end{equation}
Here we use $\alpha\geq0$. Thus, on $\widetilde\Gamma_m$,
\begin{equation}\label{eq:G-ratios}
 \frac{G_1}{G_0}\geq\frac{G_2}{G_1}
 \geq\cdots\geq\frac{G_m}{G_{m-1}}>0.
\end{equation}
For $2\leq j\leq m$, multiplying the first $j-1$ ratios shows that
$$
 G_{j-1}\geq
 \left(\frac{G_j}{G_{j-1}}\right)^{j-1}.
$$
Consequently,
\begin{equation}\label{eq:G-Maclaurin}
 G_1\geq G_2^{1/2}\geq\cdots\geq G_m^{1/m}
 \quad\text{on }\widetilde\Gamma_m.
\end{equation}
This argument also covers $m=n$. We shall pass to limits from within
$\widetilde\Gamma_m$ when using \eqref{eq:G-Maclaurin} at a limit point.

We finish this section with the differential identities for $b$.
The Ricci identity in a space form gives
$$
 \begin{aligned}
 \nabla_m b_{ij}
 &=\nabla_m u_{ij}+Ku_m\delta_{ij}\\
 &=\nabla_j u_{im}
   +K(u_j\delta_{im}-u_m\delta_{ij})+Ku_m\delta_{ij}\\
 &=\nabla_jb_{im}.
 \end{aligned}
$$
Thus $b$ is a Codazzi tensor:
\begin{equation}\label{eq:Codazzi}
 \nabla_m b_{ij}=\nabla_jb_{im}.
\end{equation}
Its Newton tensors satisfy
\begin{equation}\label{eq:Newton-divfree}
 \nabla_jE_m^{ij}(b)=0,\qquad
 \nabla_jG_m^{ij}(b)=0.
\end{equation}
For completeness, the normalized Newton recurrence is
$$
 E_m^{ij}(b)=\frac{m}{n-m+1}
 \left(E_{m-1}(b)\delta_{ij}-E_{m-1}^{lj}(b)b_{il}\right).
$$
For $m=1$, this is $E_1^{ij}=\delta_{ij}/n$. If the first identity
in \eqref{eq:Newton-divfree} holds for $m-1$, differentiation of the
recurrence gives
$$
 \begin{aligned}
 \nabla_jE_m^{ij}
 &=\frac{m}{n-m+1}
 \left(\nabla_iE_{m-1}
       -E_{m-1}^{lj}\nabla_jb_{il}\right)\\
 &=\frac{m}{n-m+1}
 \left(\nabla_iE_{m-1}
       -E_{m-1}^{lj}\nabla_ib_{lj}\right)=0.
 \end{aligned}
$$
The second identity follows by linearity. This is the Codazzi
Newton-tensor argument in \cite[Proposition~2.1]{Reilly}; see also
\cite[Proposition~2.3]{DellaPietraGavitoneXia}.
The recurrence also shows that $E_m^{ij}$ and $G_m^{ij}$, viewed
as endomorphisms, are polynomials in $b$. In particular, they commute
with $b$.

\section{Admissibility and ellipticity}\label{sec:ellipticity}

We first show that the equation and boundary conditions place $b$ in
$\widetilde\Gamma_k$ up to the boundary. We then prove ellipticity
of the sum and quotient operators. The last lemma gives the
inequalities needed for the auxiliary functions.

\begin{lemma}\label{lem:cone-inclusion}
If $k\geq2$ and $A\in\widetilde\Gamma_k$, then $A\in\Gamma_{k-1}$.
\end{lemma}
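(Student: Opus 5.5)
We argue by induction on $j$, showing $E_j(A)>0$ for $1\leq j\leq k-1$. At each step we combine Newton's inequality \eqref{eq:Newton} with the positivity of two consecutive $G$'s. The hypothesis $A\in\widetilde\Gamma_k$ gives $G_j(A)>0$ for $1\leq j\leq k$. By convention $E_0=1>0$, which starts the induction.

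Fix $1\leq j\leq k-1$ and assume $E_0,\dots,E_{j-1}>0$. Suppose, for contradiction, that $E_j\leq0$. Since $j\leq k-1\leq n-1$, Newton's inequality \eqref{eq:Newton} applies at index $j$: $E_j^2\geq E_{j-1}E_{j+1}$. Because $E_{j-1}>0$, this gives $E_{j+1}\leq E_j^2/E_{j-1}$. We now insert this into the definition \eqref{eq:G-definition} of $G_{j+1}$:
$$
 G_{j+1}=E_{j+1}+\alpha E_j\leq \frac{E_j^2}{E_{j-1}}+\alpha E_j
 =\frac{E_j\,(E_j+\alpha E_{j-1})}{E_{j-1}}=\frac{E_j\,G_j}{E_{j-1}}.
$$
For $j=1$ this reads $G_2\leq E_1G_1$, using $G_1=E_1+\alpha E_0$. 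Since $G_j>0$, $E_{j-1}>0$ and $E_j\leq0$, the right-hand side is $\leq0$. Hence $G_{j+1}\leq0$. But $j+1\leq k$, so this contradicts $A\in\widetilde\Gamma_k$. Therefore $E_j>0$, which completes the induction and shows $A\in\Gamma_{k-1}$.

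There is no real obstacle, only bookkeeping. Newton's inequality \eqref{eq:Newton} is valid for arbitrary real eigenvalues, so we need no prior cone membership. The index range $j\leq n-1$ is guaranteed by $k\leq n$. Dividing by $E_{j-1}$ is legitimate precisely because of the induction hypothesis. The argument does not use the sign of $\alpha$ beyond the definition of $G_j$, and it never needs $E_k>0$, which is why the conclusion stops at $\Gamma_{k-1}$.
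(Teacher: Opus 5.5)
Your proof is correct and is essentially the paper's argument: induct on $j$ from $E_0=1$, and if $E_j\leq0$ apply Newton's inequality at index $j$ to get $G_{j+1}\leq (E_j/E_{j-1})G_j\leq0$, contradicting $G_{j+1}>0$. Your side remarks are also accurate: Newton's inequality holds for all real eigenvalues, $j\leq n-1$ is guaranteed, and the argument does not depend on the sign of $\alpha$.
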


\begin{proof}
We prove $E_j(A)>0$ successively for $1\leq j\leq k-1$.
Assume that $E_{j-1}>0$, starting with $E_0=1$.
If $E_j\leq0$, Newton's inequality \eqref{eq:Newton} gives
$$
 \begin{aligned}
 G_{j+1}
 &=E_{j+1}+\alpha E_j\\
 &\leq\frac{E_j^2}{E_{j-1}}+\alpha E_j
 =\frac{E_j}{E_{j-1}}G_j\leq0.
 \end{aligned}
$$
This contradicts $G_{j+1}>0$. The induction proves the assertion.
\end{proof}

\begin{lemma}\label{lem:automatic-cone}
Let $u\in C^2(\overline\Omega)$ solve either
\eqref{eq:quotient-problem} or \eqref{eq:sum-problem}.
Then $b\in\widetilde\Gamma_k$ on $\overline\Omega$.
For the quotient problem, one also has
\begin{equation}\label{eq:G-lower-bound}
 G_k(b)=G_l(b)\geq1\quad\text{on }\overline\Omega.
\end{equation}
\end{lemma}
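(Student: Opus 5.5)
The plan is a connectedness argument: find one interior point where $b$ is positive definite, then show that the set where $b\in\widetilde\Gamma_k$ is open and closed in $\Omega$, and finally extend to $\overline\Omega$ by continuity.

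\emph{Step 1: a point where $b>0$.} Take $o$ and $\Phi$ as in Section~\ref{sec:preliminaries}, and choose a constant $a>0$ small enough that
$$
 (u-a\Phi)_\nu=c_2-a\Phi_\nu>0 \quad\text{on }\partial\Omega .
$$
This is possible because $c_2>0$ and $\Phi_\nu$ is bounded on the compact set $\partial\Omega$. Since the outward normal derivative of $u-a\Phi$ is positive, $u-a\Phi$ strictly decreases along the inward normal. Hence its minimum over $\overline\Omega$ is attained at some interior point $x_0$. At $x_0$ we have $\nabla^2u\geq a\nabla^2\Phi=aVg$ by \eqref{eq:space-form-identities}, and therefore
$$
 b\geq (aV+Ku)g .
$$
Here $V>0$ and $Ku\geq0$, so $b(x_0)$ is positive definite. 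Consequently every $E_j(b(x_0))>0$, hence every $G_j(b(x_0))>0$, so $b(x_0)\in\widetilde\Gamma_k$.

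\emph{Step 2: open and closed.} Let $S=\{x\in\Omega: b(x)\in\widetilde\Gamma_k\}$. It is open because $\widetilde\Gamma_k$ is open and $b$ is continuous, and it is nonempty by Step 1. For closedness, let $x_i\in S$ converge to $x\in\Omega$. Passing to the limit gives $G_j(b(x))\geq0$ for $1\leq j\leq k$, and \eqref{eq:G-Maclaurin} holds at $x$ as a limit inequality: $G_j^{1/j}\geq G_k^{1/k}$ for $j\leq k$.
\begin{itemize}
\item Sum problem: $G_k=1+\alpha>0$ everywhere, so $G_j(b(x))\geq(1+\alpha)^{j/k}>0$ for all $j\leq k$. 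Thus $x\in S$.
\item Quotient problem: on $S$ put $t=G_k(b)=G_l(b)>0$. The inequality $t^{1/l}\geq t^{1/k}$ with $l<k$ forces $t\geq1$ on $S$. By continuity $t\geq1$ at $x$, and then $G_j(b(x))\geq t^{j/k}\geq1>0$ for $j\leq k$. Thus again $x\in S$.
\end{itemize}
Since $\Omega$ is connected, $S=\Omega$.

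\emph{Step 3: up to the boundary.} Repeat the limit argument of Step 2 at points of $\partial\Omega$, using $u\in C^2(\overline\Omega)$. This shows $b\in\widetilde\Gamma_k$ on $\overline\Omega$, and in the quotient case it gives $G_k=G_l\geq1$ there, which is \eqref{eq:G-lower-bound}.

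The main obstacle is Step 1: producing a first point inside the cone without any a priori sign on $\nabla^2u$. The comparison function $a\Phi$ settles this because $\nabla^2\Phi=Vg$ with $V>0$, the boundary condition $u_\nu=c_2>0$ forces an interior minimum of $u-a\Phi$, and the sign condition $Ku\geq0$ controls the zeroth-order term in $b$.
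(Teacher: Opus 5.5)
Your proof is correct and has the same structure as the paper's. Both arguments seed a point of $\widetilde\Gamma_k$ at an interior minimum, show the cone set is open and closed using the bound $G_j\geq G_k^{j/k}$ from \eqref{eq:G-Maclaurin} (which also forces $G_k=G_l\geq1$ in the quotient case), and conclude by connectedness. The one real difference is the seed: you minimize $u-a\Phi$ instead of $u$, so $b(x_0)$ is positive definite outright. The paper only gets $b(x_0)\geq0$, and then has to invoke the equation ($G_l\neq0$, resp.\ $G_k=1+\alpha$) and an eigenvalue count to land in $\widetilde\Gamma_k$.
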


\begin{proof}
The function $u$ attains its minimum on $\overline\Omega$.
This minimum cannot occur at a boundary point $y$. Indeed, the
curve $\exp_y(-t\nu)$ lies in $\Omega$ for small $t>0$, and
$$
 \left.\frac{d}{dt}u(\exp_y(-t\nu))\right|_{t=0}=-c_2<0.
$$
Hence $u$ has an interior minimum at some $x_0$. At this point, we can see that
$$
 b(x_0)=\nabla^2u(x_0)+Ku(x_0)g\geq0.
$$
All elementary symmetric functions of $b(x_0)$ are nonnegative.
For the quotient problem, $G_l(b(x_0))\ne0$ and $G_k=G_l$,
so $G_k(b(x_0))>0$. For the sum problem,
$G_k(b(x_0))=1+\alpha>0$.

If $\alpha=0$, positivity of $G_k=E_k$ at a nonnegative
eigenvalue vector implies positivity of $E_j$ for $j\leq k$.
If $\alpha>0$, positivity of $G_k$ implies that at least $k-1$
eigenvalues are positive when $k\geq2$. Thus $G_j(b(x_0))>0$
for $j<k$ as well. The case $k=1$ follows directly from
$G_1(b(x_0))>0$. We have proved that
$b(x_0)\in\widetilde\Gamma_k$.

Let
$$
 \mathcal U=\{x\in\overline\Omega:b(x)\in\widetilde\Gamma_k\}.
$$
This set is nonempty and relatively open. We prove that it is
relatively closed.

First consider the quotient problem. On $\mathcal U\cap\Omega$,
the equation and \eqref{eq:G-ratios} give
$$
 1=\frac{G_k}{G_l}
 =\frac{G_{l+1}}{G_l}\cdots\frac{G_k}{G_{k-1}},
 \qquad
 \frac{G_1}{G_0}\geq\cdots\geq\frac{G_k}{G_{k-1}}>0.
$$
The first factor in the product is at least one. Each of the
preceding ratios is therefore also at least one. It follows that
$$
 G_l=\frac{G_1}{G_0}\cdots\frac{G_l}{G_{l-1}}\geq1.
$$
Continuity gives $G_k=G_l\geq1$ on all of $\mathcal U$.
For the sum problem, $G_k=1+\alpha$ on $\overline\Omega$ by
continuity.

Now take $x_m\in\mathcal U$ with $x_m\to x\in\overline\Omega$.
The positive form of \eqref{eq:G-Maclaurin} gives
$$
 G_j(b(x_m))\geq G_k(b(x_m))^{j/k},
 \qquad 1\leq j\leq k.
$$
In the quotient case the right-hand side is at least one.
In the sum case it equals $(1+\alpha)^{j/k}>0$.
Passing to the limit proves $G_j(b(x))>0$ for every $j\leq k$.
Thus $\mathcal U$ is relatively closed.
Since $\overline\Omega$ is connected, $\mathcal U=\overline\Omega$.
The preceding estimate also proves \eqref{eq:G-lower-bound}.
\end{proof}

\begin{lemma}\label{lem:individual-ellipticity}
If $A\in\widetilde\Gamma_k$, then $(G_j^{pq}(A))$ is positive
definite for every $1\leq j\leq k$.
\end{lemma}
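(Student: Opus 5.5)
Since $G_j^{pq}(A)$ commutes with $A$ (it is a polynomial in $A$ by the Newton recurrence of Section~\ref{sec:preliminaries}), I will diagonalize $A$ in an orthonormal frame. Write $\lambda=(\lambda_1,\dots,\lambda_n)$ for its eigenvalues. Then $(G_j^{pq}(A))$ is diagonal with entries $\partial G_j/\partial\lambda_p$. So it suffices to show $\partial G_j/\partial\lambda_p>0$ for every $p$ and every $1\leq j\leq k$.

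The key device is to absorb the parameter $\alpha$ as an extra eigenvalue. For $\mu=(\lambda,t)\in\mathbb R^{n+1}$ we have $\sigma_j(\lambda,t)=\sigma_j(\lambda)+t\sigma_{j-1}(\lambda)$. Since $\binom nj^{-1}\cdot\binom n{j-1}=\frac{j}{n-j+1}$, setting
$$
 t_j=\alpha\,\frac{n-j+1}{j}\geq0
$$
gives
$$
 G_j(\lambda)=\binom nj^{-1}\sigma_j(\lambda,t_j),\qquad
 \frac{\partial G_j}{\partial\lambda_p}(\lambda)
 =\binom nj^{-1}\sigma_{j-1}\bigl((\lambda,t_j)\,|\,p\bigr),
$$
where $(\cdot\,|\,p)$ means that the $p$-th entry is deleted. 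By the classical fact that $\mu\in\Gamma_j(\mathbb R^{n+1})$ implies $\sigma_{j-1}(\mu\,|\,p)>0$ for every $p$, it is enough to check that $(\lambda,t_j)$ lies in the G{\aa}rding cone $\Gamma_j$ in $n+1$ variables. Here $\Gamma_j$ means $\sigma_i>0$ for $1\leq i\leq j$, i.e.\ the $(n+1)$-variable version of the cone defined in Section~\ref{sec:preliminaries}.

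I will verify this membership in two steps.
\begin{itemize}
\item \emph{Top index $i=j$.} Here $\sigma_j(\lambda,t_j)=\binom nj G_j(\lambda)>0$, since $A\in\widetilde\Gamma_k$ and $j\leq k$.
\item \emph{Lower indices $1\leq i\leq j-1$.} Here $i\leq k-1$. By Lemma~\ref{lem:cone-inclusion} (when $k\geq2$) we have $A\in\Gamma_{k-1}$, so $\sigma_i(\lambda)>0$ and $\sigma_{i-1}(\lambda)>0$. Since $t_j\geq0$, this gives $\sigma_i(\lambda,t_j)=\sigma_i(\lambda)+t_j\sigma_{i-1}(\lambda)>0$.
\end{itemize}
The case $j=1$ is immediate, since $G_1^{pq}=\delta_{pq}/n$. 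The case $k=1$ therefore needs no cone inclusion.

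The only nontrivial ingredient, and the step I expect to be the main obstacle, is the top-order case $j=k$. There $A$ need not lie in $\Gamma_k$, so neither $E_{k-1}^{pq}$ nor $E_{k-2}^{pq}$ alone is controlled by the usual Hessian theory. The embedding $(\lambda,t_k)$ circumvents this: the extra nonnegative coordinate turns the sum condition $G_k>0$ into the honest G{\aa}rding condition $\sigma_k>0$ in dimension $n+1$.

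For the fact $\sigma_{j-1}(\mu\,|\,p)>0$ on $\Gamma_j$, I will quote the standard result (e.g.\ Caffarelli--Nirenberg--Spruck or Lieberman). If a self-contained argument is wanted, it follows by induction from $\sigma_i(\mu)=\mu_p\sigma_{i-1}(\mu\,|\,p)+\sigma_i(\mu\,|\,p)$ together with the fact that $\Gamma_j$ is an open convex cone containing the positive cone.
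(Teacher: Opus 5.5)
Your proposal is correct and follows essentially the same route as the paper. The paper also adjoins $\beta_j=\alpha\frac{n-j+1}{j}$ as an extra eigenvalue and writes $\binom nj G_j=\sigma_j(\lambda,\beta_j)>0$. It uses Lemma~\ref{lem:cone-inclusion} to get $\sigma_m(\lambda,\beta_j)>0$ for $m<j$, and then applies the ellipticity of $\sigma_j$ on $\Gamma_j$ in dimension $n+1$ to conclude that $\partial G_j/\partial\lambda_i>0$ for every $i$.
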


\begin{proof}
The assertion for $j=1$ follows from $G_1^{pq}=\delta_{pq}/n$.
Let $j\geq2$, and let $\lambda=(\lambda_1,\ldots,\lambda_n)$
be the eigenvalues of $A$. Set
$$
 \beta_j=\alpha\frac{n-j+1}{j}.
$$
The elementary symmetric function in $n+1$ variables satisfies
\begin{equation}\label{eq:augmented-G}
 \sigma_j(\lambda_1,\ldots,\lambda_n,\beta_j)
 =\sigma_j(\lambda)+\beta_j\sigma_{j-1}(\lambda)
 =\binom njG_j(\lambda)>0.
\end{equation}
It follows from  Lemma~\ref{lem:cone-inclusion} that $A\in\Gamma_{k-1}$.
For $m<j$, we therefore have
$$
 \sigma_m(\lambda_1,\ldots,\lambda_n,\beta_j)
 =\sigma_m(\lambda)+\beta_j\sigma_{m-1}(\lambda)>0.
$$
Thus $(\lambda,\beta_j)\in\Gamma_j$ in dimension $n+1$.

The elementary symmetric function $\sigma_j$ is elliptic on
$\Gamma_j$; see \cite{CNS} and
\cite[Proposition~2.3]{GaoMaYang}. Hence, for every $i$,
$$
 \frac{\partial G_j}{\partial\lambda_i}
 =\binom nj^{-1}
 \left(\sigma_{j-1}(\lambda\mid i)
       +\beta_j\sigma_{j-2}(\lambda\mid i)\right)>0.
$$
Here $\lambda\mid i$ means that the $i$-th entry is omitted.
In an eigenframe of $A$, these derivatives are the eigenvalues
of $(G_j^{pq})$. The first-variation formula for a symmetric
polynomial also holds at repeated eigenvalues. Therefore
$(G_j^{pq})$ is positive definite.
\end{proof}

\begin{proposition}\label{prop:quotient-ellipticity}
Let $1\leq l<k\leq n$. The operator
$$
 F(A)=\frac{G_k(A)}{G_l(A)}
$$
is elliptic on $\widetilde\Gamma_k$.
\end{proposition}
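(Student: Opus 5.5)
We must show that the matrix $(F^{pq})$ is positive definite on $\widetilde\Gamma_k$. Since $F^{pq}$, $G_k^{pq}$ and $G_l^{pq}$ are all polynomials in $A$, we work in an eigenframe of $A$. There all three matrices are diagonal, and it suffices to prove $\partial F/\partial\lambda_i>0$ for each $i$. Write
$$
 \frac{\partial F}{\partial\lambda_i}
 =\frac{G_l\,\partial_iG_k-G_k\,\partial_iG_l}{G_l^2},
$$
where $G_l>0$ because $A\in\widetilde\Gamma_k$. The task is therefore to show
$$
 G_l\,\partial_iG_k>G_k\,\partial_iG_l .
$$

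The key device is the splitting of the symmetric functions with respect to $\lambda_i$. For each $m$,
$$
 G_m(\lambda)=\binom nm^{-1}\left(\sigma_m(\lambda|i)+\lambda_i\sigma_{m-1}(\lambda|i)\right)+\alpha\binom n{m-1}^{-1}\left(\sigma_{m-1}(\lambda|i)+\lambda_i\sigma_{m-2}(\lambda|i)\right).
$$
So $G_m=P_m+\lambda_i\,\partial_iG_m$, where $P_m$ does not depend on $\lambda_i$. Substituting this into the target inequality, the $\lambda_i$ terms cancel, and it reduces to
$$
 P_l\,\partial_iG_k>P_k\,\partial_iG_l .
$$

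Rather than proving this directly, I would use a cleaner route: differentiate along the ray. Set $g(t)=F(\lambda+te_i)$. By the splitting, $G_m(\lambda+te_i)=G_m(\lambda)+t\,\partial_iG_m(\lambda)$ is affine in $t$. By Lemma~\ref{lem:individual-ellipticity}, $\partial_iG_j>0$ on $\widetilde\Gamma_k$ for $j\le k$, so moving in the $+e_i$ direction keeps every $G_j$, $j\le k$, increasing. Hence $\lambda+te_i\in\widetilde\Gamma_k$ for all $t\ge0$, since $\widetilde\Gamma_k$ is defined by the $G_j$. Then $g(t)$ is a ratio of two affine functions, $(a+bt)/(c+dt)$ with $b,d>0$, and $g'(0)=(bc-ad)/c^2$. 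The sign of $bc-ad$ is constant in $t$, so it suffices to show that $g$ is strictly increasing somewhere, or compare the limit $b/d$ with $g(0)=a/c$.

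The main obstacle is the inequality $b/d>a/c$, that is,
$$
 \frac{\partial_iG_k}{\partial_iG_l}>\frac{G_k}{G_l}.
$$
I would prove it through the augmented vector used in Lemma~\ref{lem:individual-ellipticity}, although the parameters $\beta_j$ depend on $j$, which is inconvenient. A simpler plan is the telescoping
$$
 F=\prod_{j=l+1}^{k}\frac{G_j}{G_{j-1}} .
$$
It then suffices that each ratio $Q_j=G_j/G_{j-1}$, $j\le k$, is nondecreasing in $\lambda_i$, with strict increase for at least one factor. Each factor is positive on $\widetilde\Gamma_k$.

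For $\partial_iQ_j$, the numerator is $G_{j-1}\partial_iG_j-G_j\partial_iG_{j-1}$. Using the splitting, this equals $P_{j-1}\partial_iG_j-P_j\partial_iG_{j-1}$, with $P_m=G_m(\lambda|i$ with $\lambda_i=0)$. Note that $\partial_iG_m$ and $P_m$ are, up to normalizing constants, the sum-Hessian functions of the $(n-1)$-vector $\lambda|i$. The desired sign thus becomes a Newton-type inequality $\tilde G_{j-1}^2\ge\tilde G_{j-2}\tilde G_j$ for the reduced vector, with the adjusted normalization constants. I would invoke Ren's inequality \eqref{eq:Ren} in dimension $n-1$, together with \eqref{eq:first-Newton} for the first index, and track the binomial factors. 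These factors produce the strict inequality, since the ratio of normalizing constants exceeds $1$ exactly as in the classical proof for $\sigma_k/\sigma_l$. Positivity of the reduced quantities follows from $\partial_iG_j>0$ (Lemma~\ref{lem:individual-ellipticity}). The bookkeeping of the $\alpha$-terms with differing normalizations in dimension $n-1$ is the step I expect to need the most care.
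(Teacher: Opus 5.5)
Your reductions are correct as far as they go. In an eigenframe it suffices to prove $G_l\,\partial_iG_k-G_k\,\partial_iG_l>0$, each $G_m$ is affine in $\lambda_i$, and the telescoping product reduces everything to the adjacent case $l=k-1$. But the adjacent case is the whole difficulty, and neither of your routes settles it.

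The ray argument in the $+e_i$ direction only restates the claim, since $b/d>a/c$ is equivalent to $bc-ad>0$.

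The reduced-vector route rests on the assertion that $P_{j-1}\partial_iG_j-P_j\partial_iG_{j-1}$ is a Newton-type expression to which \eqref{eq:Ren} in dimension $n-1$ applies, and this is not so. Let $e_r$ be the normalized elementary symmetric functions of $\lambda|i$ in dimension $n-1$. Then
\[
 \partial_iG_m=\frac mn\Bigl(e_{m-1}+\alpha\,\frac{m-1}{m}\,e_{m-2}\Bigr),\qquad
 P_m=\frac{n-m}{n}\Bigl(e_m+\alpha\,\frac{n-m+1}{n-m}\,e_{m-1}\Bigr).
\]
So the four factors carry four different effective parameters, while \eqref{eq:Ren} concerns a single family $E_m+\alpha E_{m-1}$ with fixed $\alpha$. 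Expanding gives
\[
 n^2\bigl(P_{j-1}\partial_iG_j-P_j\partial_iG_{j-1}\bigr)
 =A_j+\alpha\bigl[(n-j+2)j\,e_{j-1}e_{j-2}-(n-j)(j-2)\,e_je_{j-3}\bigr]+\alpha^2C_j .
\]
Here $A_j,C_j$ are Newton-type and nonnegative. The middle term, however, compares non-adjacent products. No Newton or Ren inequality controls it, and it has no sign for general real vectors.

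Positivity of the sum comes from compensation among the three terms together with cone information. For $n=k=2$, $l=1$, $i=1$, the expression equals $2(\lambda_2+\alpha)^2$, which vanishes on the line $\lambda_2=-\alpha$. Strictness holds only because this line misses $\widetilde\Gamma_2$, not because a ratio of normalizing constants exceeds one. So the step you call bookkeeping is the entire content of the proposition, and it is missing.

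The paper avoids the reduced vector altogether. It keeps your affine parametrization $g_j(t)=G_j(\lambda)+a_jt$ with $a_j>0$, but moves in the negative direction. It takes $t_0=\max_{j\le k}(-G_j(\lambda)/a_j)<0$, the first parameter at which some $G_j$ vanishes, and lets $r$ be the smallest index with $g_r(t_0)=0$. Suppose $r<k$. For $r=1$ use \eqref{eq:first-Newton}; for $r\ge2$ use \eqref{eq:Ren} in dimension $n$ with the original $\alpha$. Either inequality at $t_0$ forces $g_{r+1}(t_0)=0$. Expanding the same inequality at $t_0+s$ and letting $s\downarrow0$ then gives $0\ge a_2$ in the first case and $0\ge g_{r-1}(t_0)a_{r+1}>0$ in the second, a contradiction. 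Hence $G_k$ vanishes first, $g_l(t_0)>0$, and
\[
 a_kG_l(\lambda)-a_lG_k(\lambda)=a_kg_l(t_0)>0 .
\]

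In your notation: the sign of $bc-ad$ does not depend on $t$. So you only needed to evaluate at the zero $t=-a/b$ of the numerator, where $c+dt=(bc-ad)/b$. The real content is that $G_l$ is still positive when $G_k$ first reaches zero, and this uses Ren's inequality only in its original form.
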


\begin{proof}
Fix an eigenvalue vector $\lambda$ in $\widetilde\Gamma_k$
and an index $i$. By Lemma~\ref{lem:individual-ellipticity},
$$
 a_j=\frac{\partial G_j}{\partial\lambda_i}(\lambda)>0,
 \qquad 1\leq j\leq k.
$$
Each $G_j$ is affine in $\lambda_i$. Consequently,
$$
 g_j(t)=G_j(\lambda+te_i)=G_j(\lambda)+a_jt.
$$
Set
$$
 t_0=\max_{1\leq j\leq k}
 \left(-\frac{G_j(\lambda)}{a_j}\right)<0.
$$
Then $g_j(t)>0$ for $t>t_0$, while $g_j(t_0)\geq0$ for all
$j\leq k$, and at least one of these values vanishes.

Let $r$ be the first index for which $g_r(t_0)=0$.
We claim that $r=k$. Suppose that $r<k$.
If $r=1$, \eqref{eq:first-Newton} and continuity give
$g_2(t_0)=0$. Applying the same inequality at $t_0+s$,
where $s>0$, gives
$$
 a_1^2s^2\geq a_2s.
$$
Division by $s$ and passage to the limit $s\downarrow0$
contradict $a_2>0$.

If $r\geq2$, then $g_{r-1}(t_0)>0$. Ren's inequality
\eqref{eq:Ren}, first at $t>t_0$ and then by continuity,
gives $g_{r+1}(t_0)=0$. At $t=t_0+s$, it reads
$$
 a_r^2s^2\geq
 \bigl(g_{r-1}(t_0)+a_{r-1}s\bigr)a_{r+1}s.
$$
After division by $s$ and passage to the limit, we obtain
$0\geq g_{r-1}(t_0)a_{r+1}>0$, a contradiction.
Thus $r=k$, and
$$
 g_k(t_0)=0,\qquad g_l(t_0)>0.
$$

The affine formulas now yield
$$
 a_kG_l(\lambda)-a_lG_k(\lambda)=a_kg_l(t_0)>0.
$$
Therefore
\begin{equation}\label{eq:quotient-eigenvalue-derivative}
 \frac{\partial F}{\partial\lambda_i}
 =\frac{a_kG_l-a_lG_k}{G_l^2}>0.
\end{equation}
This holds for every $i$. The spectral first-variation formula
then shows that $(F^{pq})$ is positive definite.
\end{proof}

For a solution of either boundary value problem, the relevant
linearization is uniformly elliptic on $\overline\Omega$.
Indeed, Lemma~\ref{lem:automatic-cone} places the eigenvalue
vectors in a compact subset of $\widetilde\Gamma_k$, and the
preceding results give strict positivity there. Their continuous
eigenvalues therefore have a positive lower bound.

\begin{lemma}\label{lem:quotient-algebra}
Let $A\in\widetilde\Gamma_k$, where $1\leq l<k\leq n$, and
assume $G_k(A)=G_l(A)=G$. Then
\begin{equation}\label{eq:quotient-neighbours}
 G\geq1,\qquad
 G_{k-1}\geq G,\qquad
 G_{l-1}\leq G,\qquad
 G_{l+1}\geq G.
\end{equation}
If $k<n$, one also has $G_{k+1}\leq G$. Moreover,
\begin{equation}\label{eq:quotient-E}
 E_1\geq1,\qquad
 E_{k-1}-E_{l-1}\geq0,\qquad
 E_l-E_k=\alpha(E_{k-1}-E_{l-1}).
\end{equation}
If $E_{k-1}=E_{l-1}$, then $A=I$.
\end{lemma}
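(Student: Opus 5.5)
The plan is to work entirely with the ratio chain \eqref{eq:G-ratios}, which holds on $\widetilde\Gamma_k$. Write $\rho_j=G_j/G_{j-1}$, so that $\rho_1\geq\rho_2\geq\cdots\geq\rho_k>0$.

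\emph{Inequalities for $G$.} From $G_k=G_l$ we get $\rho_{l+1}\cdots\rho_k=1$. Since the factors are decreasing, $\rho_{l+1}\geq1\geq\rho_k$. Monotonicity then gives $\rho_j\geq1$ for all $j\leq l+1$.
\begin{itemize}
\item $G=G_l=\rho_1\cdots\rho_l\geq1$, as in Lemma~\ref{lem:automatic-cone}.
\item $G_{l-1}=G_l/\rho_l\leq G$ and $G_{l+1}=\rho_{l+1}G\geq G$.
\item $G_{k-1}=G_k/\rho_k\geq G$.
\end{itemize}
For $k<n$ we also need $G_{k+1}\leq G$. Here $G_{k+1}$ need not be positive, so I would not use the ratio chain. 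Instead, Ren's inequality \eqref{eq:Ren} at $j=k$ (valid since $2\leq k\leq n-1$) gives $G_{k+1}\leq G_k^2/G_{k-1}=G\rho_k\leq G$, because $G_{k-1}>0$ and $\rho_k\leq1$.

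\emph{Identities for $E$.} Subtracting the definitions of $G_k$ and $G_l$ gives $0=E_k-E_l+\alpha(E_{k-1}-E_{l-1})$, which is the identity in \eqref{eq:quotient-E}. For $E_1\geq1$:
\begin{itemize}
\item If $l\geq2$, then $G_1=\rho_1\geq1$, i.e. $E_1+\alpha\geq1+\alpha$.
\item If $l=1$, then $G_1=G\geq1$. Also $E_1=G_1-\alpha$, and $\rho_2\geq1$ gives $G_2\geq G_1$.
\end{itemize}
I am not sure this covers $l=1$ directly. A cleaner route is via $E$-Maclaurin: by Lemma~\ref{lem:cone-inclusion}, $A\in\Gamma_{k-1}$. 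If $E_1<1$, then $E_j\leq E_1^j<E_1$ for $1\leq j\leq k-1$; this chain has to be combined with the sign of $E_k$ to reach a contradiction.

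\emph{The inequality $E_{k-1}\geq E_{l-1}$ and the rigidity case.} This is the main obstacle. My approach is to show $E_{j}\leq E_{j-1}$ for $l\leq j\leq k-1$ and then telescope. Use the $E$-ratio chain \eqref{eq:Maclaurin} on $\Gamma_{k-1}$ together with $G_{k-1}\geq G_l$ from above:
\[
E_{k-1}+\alpha E_{k-2}\geq E_l+\alpha E_{l-1}.
\]
I would argue by contradiction. Suppose $E_{k-1}<E_{l-1}$. The decreasing ratios $E_j/E_{j-1}$ for $j\leq k-1$ then force some ratio to be $<1$, hence all later ratios are $<1$. I expect this, combined with the identity $E_l-E_k=\alpha(E_{k-1}-E_{l-1})$ and Newton's inequality at index $k-1$ (to control $E_k\leq E_{k-1}^2/E_{k-2}$), to give $E_l<E_k$ in a way incompatible with the decreasing chain. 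The case $\alpha=0$ is just $E_k=E_l$ with Maclaurin, which forces all ratios equal to $1$.

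For equality, $E_{k-1}=E_{l-1}$ gives $E_l=E_k$ as well. Then equality holds in the $E$ ratio chain across distinct orders with positive quantities, and \cite[Proposition~2.4]{GaoMaYang} forces all eigenvalues equal, say $\lambda$. Finally, $E_1\geq1$ together with $G_k=G_l$ forces $\lambda=1$, i.e. $A=I$.
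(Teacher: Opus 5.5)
\paragraph{Verdict.} Your treatment of the $G$-inequalities, of the identity $E_l-E_k=\alpha(E_{k-1}-E_{l-1})$, and of the rigidity case matches the paper. The proposal has a genuine gap: two of the claims in \eqref{eq:quotient-E} are not actually proved.

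\paragraph{The inequality $E_1\geq1$.} Your first bullet contains an error. Since $\rho_1=G_1/G_0=E_1+\alpha$, the inequality $\rho_1\geq1$ gives only $E_1\geq1-\alpha$, not $E_1+\alpha\geq1+\alpha$, and for $\alpha>0$ this is too weak. The case $l=1$ and the Maclaurin route are left open. The missing step uses something you already established: $\rho_j\geq1$ for all $j\leq l+1$, so $\rho_2\geq1$ for every $l\geq1$. Combined with Newton's inequality $E_2\leq E_1^2$ and $G_1=E_1+\alpha>0$, this gives
\[
 1\leq\rho_2=\frac{E_2+\alpha E_1}{E_1+\alpha}\leq\frac{E_1^2+\alpha E_1}{E_1+\alpha}=E_1 ,
\]
which is exactly the paper's argument. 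Your equality case only needs $\lambda>0$, which already follows from $A\in\Gamma_{k-1}$, so that part is fine.

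\paragraph{The inequality $E_{k-1}\geq E_{l-1}$.} Your announced plan of proving $E_j\leq E_{j-1}$ for $l\leq j\leq k-1$ and telescoping would yield $E_{k-1}\leq E_{l-1}$, the \emph{opposite} inequality. The contradiction argument is only described as something you expect to work. It does close quickly, and $G_{k-1}\geq G_l$ is not needed.
\begin{itemize}
\item Newton's inequality \eqref{eq:Newton} at index $k-1$ gives $E_k\leq E_{k-1}^2/E_{k-2}$. It is valid for all real eigenvalues, and $E_{k-2}>0$ by Lemma~\ref{lem:cone-inclusion}.
\item The chain \eqref{eq:Maclaurin} on $\Gamma_{k-1}$ gives $E_{k-1}/E_{k-2}\leq E_l/E_{l-1}$.
\item Together these give $E_{l-1}E_k\leq E_{k-1}E_l$.
\item If $E_{k-1}<E_{l-1}$, the identity gives $E_k\geq E_l>0$, hence $E_{l-1}E_k\geq E_{l-1}E_l>E_{k-1}E_l$, a contradiction.
\end{itemize}

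\paragraph{The paper's route.} The paper avoids contradiction altogether. Subtracting the two expressions for $G$ gives
\[
 G(E_{k-1}-E_{l-1})=E_{k-1}E_l-E_{l-1}E_k .
\]
The right side is positive if $E_k\leq0$, and nonnegative by Maclaurin in $\Gamma_k$ if $E_k>0$; since $G>0$, the inequality follows. The same identity also handles the equality case. As written, however, your proposal leaves this inequality unproved, and it is the key input for Lemmas~\ref{lem:quotient-P} and \ref{lem:quotient-Ptilde}.
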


\begin{proof}
We suppress the argument $A$ in this proof. The ratios in
\eqref{eq:G-ratios} are non-increasing, and
$$
 \frac{G_{l+1}}{G_l}\cdots\frac{G_k}{G_{k-1}}=1.
$$
Their first factor is at least one and their last factor is at
most one. Hence
$$
 \frac{G_l}{G_{l-1}}\geq
 \frac{G_{l+1}}{G_l}\geq1
 \geq\frac{G_k}{G_{k-1}}.
$$
These inequalities give all the comparisons in
\eqref{eq:quotient-neighbours}. In particular, the first $l$
ratios are at least one, so their product $G_l$ is at least one.
If $k<n$, Ren's inequality at index $k$ gives
$$
 G_{k+1}\leq\frac{G_k^2}{G_{k-1}}\leq G.
$$
This estimate does not require $G_{k+1}>0$.

Since $l\geq1$, the ratio $G_2/G_1$ precedes or equals
$G_{l+1}/G_l$. By Newton's inequality and $G_1>0$,
$$
 1\leq\frac{G_2}{G_1}
 =\frac{E_2+\alpha E_1}{E_1+\alpha}
 \leq\frac{E_1^2+\alpha E_1}{E_1+\alpha}=E_1.
$$

Next, the equality $G_k=G_l$ gives
\begin{equation}\label{eq:E-comparison}
 G(E_{k-1}-E_{l-1})
 =E_{k-1}E_l-E_{l-1}E_k.
\end{equation}
Lemma~\ref{lem:cone-inclusion} gives $E_j>0$ for
$j\leq k-1$. Thus the right-hand side of
\eqref{eq:E-comparison} is positive if $E_k\leq0$.
If $E_k>0$, then $A\in\Gamma_k$, and
\eqref{eq:Maclaurin} gives
$$
 \frac{E_k}{E_{k-1}}\leq\frac{E_l}{E_{l-1}}.
$$
The right-hand side of \eqref{eq:E-comparison} is again
nonnegative. This proves $E_{k-1}\geq E_{l-1}$.
The last identity in \eqref{eq:quotient-E} follows by
subtracting $G_k=G_l$.

Suppose that $E_{k-1}=E_{l-1}$. The case $E_k\leq0$
has been excluded. Equality therefore holds between two
distinct positive Newton quotients in $\Gamma_k$.
The equality case in \eqref{eq:Maclaurin} shows that all
eigenvalues equal a number $a>0$. The equation becomes
$$
 a^{l-1}(a+\alpha)(a^{k-l}-1)=0.
$$
Since $a+\alpha>0$, we obtain $a=1$.
\end{proof}

\section{Auxiliary functions and star-shaped domains}\label{sec:auxiliary}

We study the two auxiliary functions used for the quotient equation.
The first gives a gradient estimate. The second detects equality in
the algebraic inequalities. We then prove
Theorem~\ref{thm:star-quotient}.

Throughout the auxiliary-function calculations, $u$ solves
\eqref{eq:quotient-problem}. We write
$$
 F=\frac{G_k}{G_l},\qquad
 F^{ij}=\frac{\partial F}{\partial b_{ij}},\qquad
 G=G_k(b)=G_l(b).
$$
The denominator $G$ is positive by Lemma~\ref{lem:automatic-cone}.
Define
\begin{equation}\label{eq:P-definition}
 P=|\nabla u|^2+Ku^2-2u,\qquad
 P_0=c_2^2+K^3c_1^2-2Kc_1.
\end{equation}
This is the auxiliary function used for Hessian equations in
\cite{GaoJiaYan,GaoMaYang}. Since $u$ is constant on the boundary,
$\nabla u=c_2\nu$ there, and
\begin{equation}\label{eq:P-boundary}
 P=P_0\quad\text{on }\partial\Omega.
\end{equation}

\begin{lemma}\label{lem:quotient-P}
For a solution of \eqref{eq:quotient-problem},
$$
 F^{ij}\nabla_{ij}^2P\geq0\quad\text{in }\Omega.
$$
Consequently, $P\leq P_0$ on $\overline\Omega$.
Either $P\equiv P_0$, or $P<P_0$ throughout $\Omega$.
\end{lemma}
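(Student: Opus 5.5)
We compute $F^{ij}\nabla^2_{ij}P$ directly in an orthonormal frame at an interior point and use the identities of Section~\ref{sec:preliminaries}. Since $P_i=2u_pu_{pi}+2Kuu_i-2u_i=2u_pb_{pi}-2u_i$, differentiating once more gives
$$
 \tfrac12P_{ij}=u_{pj}b_{pi}+u_p\nabla_jb_{pi}-u_{ij}
 =b_{pj}b_{pi}-Kub_{ij}+u_p\nabla_pb_{ij}-b_{ij}+Kug_{ij},
$$
where we used $u_{pj}=b_{pj}-Ku\delta_{pj}$ and the Codazzi property \eqref{eq:Codazzi}. Contracting with $F^{ij}$ and using $F^{ij}\nabla_pb_{ij}=\nabla_pF=0$, we obtain
$$
 \tfrac12F^{ij}P_{ij}=F^{ij}b_{pi}b_{pj}-(1+Ku)F^{ij}b_{ij}+KuF^{ij}\delta_{ij}.
$$

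Next we express these traces through $G_j$. Since $F^{ij}=(G_k^{ij}-G_l^{ij})/G$ on the solution ($G_k=G_l=G$), the identities \eqref{eq:Euler-G}, \eqref{eq:trace-G} and \eqref{eq:square-G} give
$$
 G F^{ij}b_{ij}=(k-l)G-\alpha(E_{k-1}-E_{l-1}),\qquad
 GF^{ij}\delta_{ij}=kG_{k-1}-lG_{l-1}-\alpha(E_{k-2}-E_{l-2}),
$$
$$
 GF^{ij}b_{pi}b_{pj}=-(n-k)G_{k+1}+(n-l)G_{l+1}-\alpha(E_k-E_l).
$$
Note the cancellation of the $nE_1G$ terms. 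By \eqref{eq:quotient-E}, $-\alpha(E_k-E_l)=\alpha^2(E_{k-1}-E_{l-1})\ge0$. By Lemma~\ref{lem:quotient-algebra}, $G_{l+1}\ge G\ge G_{k+1}$, so $(n-l)G_{l+1}-(n-k)G_{k+1}\ge(k-l)G$. Hence
$$
 GF^{ij}b_{pi}b_{pj}-GF^{ij}b_{ij}\ge(1+\alpha)\alpha(E_{k-1}-E_{l-1})\ge0 .
$$

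The remaining $Ku$ terms are $Ku\,G\,F^{ij}(\delta_{ij}-b_{ij})$, and we need this to be nonnegative using $Ku\ge0$. The identity $GF^{ij}(\delta_{ij}-b_{ij})=kG_{k-1}-lG_{l-1}-(k-l)G+\alpha(\ldots)$ together with $G_{k-1}\ge G\ge G_{l-1}$ makes the $G$-part nonnegative. The $\alpha$-corrections are $\alpha[(E_{k-1}-E_{l-1})-(E_{k-2}-E_{l-2})]$. Controlling their sign is the main obstacle. I would first try to rewrite $kG_{k-1}-lG_{l-1}$ as $(k-l)G+k(G_{k-1}-G)+l(G-G_{l-1})$ and absorb the $\alpha$-terms using the ratio chain \eqref{eq:G-ratios} and Newton's inequalities for the $E_j$. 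If a direct estimate fails, I would instead work in the augmented variables $(\lambda,\beta)$ from Lemma~\ref{lem:individual-ellipticity}. Alternatively, I would note that for $K\le0$ the hypothesis gives $u$ with $Ku\ge0$ meaning $u\le0$, and handle the signs case by case. Once $F^{ij}P_{ij}\ge0$ is established, we use the uniform ellipticity of $F^{ij}$ noted after Proposition~\ref{prop:quotient-ellipticity}. The weak maximum principle and \eqref{eq:P-boundary} then give $P\le P_0$, and the strong maximum principle gives the dichotomy $P\equiv P_0$ or $P<P_0$ in $\Omega$, since $\Omega$ is connected.
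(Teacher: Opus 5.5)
Up to the point where you isolate $Ku\,GF^{ij}(\delta_{ij}-b_{ij})$, your computation agrees with the paper's. The formula for $\tfrac12F^{ij}\nabla^2_{ij}P$, the three contracted identities, the bound $GF^{ij}b_{pi}b_{pj}-GF^{ij}b_{ij}\ge\alpha(1+\alpha)(E_{k-1}-E_{l-1})$, and the closing maximum-principle step are all correct. The gap is the step you flag yourself: the lemma needs $F^{ij}(\delta_{ij}-b_{ij})\ge0$ wherever $Ku>0$, and the proposal does not establish it.

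Your split cannot be completed as written, because the correction $\alpha[(E_{k-1}-E_{l-1})-(E_{k-2}-E_{l-2})]$ is genuinely negative in general. For $k=2$, $l=1$ and $b=g$, your $G$-part equals $\alpha$ while the correction equals $-\alpha$. So the two pieces must be recombined, not estimated separately. The fallbacks do not remove the difficulty either. For $K\neq0$ the factor $Ku$ is positive wherever $u\neq0$, so no sign case analysis makes the term disappear. The augmented variables of Lemma~\ref{lem:individual-ellipticity} use $\beta_k\neq\beta_l$ when $\alpha>0$, so they do not turn $F$ into a single standard Hessian quotient.

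The missing observation is the identity $G_{m-1}-\alpha E_{m-2}=E_{m-1}$, valid also for $m=1$ with the conventions $G_0=1$, $E_{-1}=0$. It rewrites \eqref{eq:trace-G} as $G_m^{ij}\delta_{ij}=(m-1)G_{m-1}+E_{m-1}$. Thus
$GF^{ij}\delta_{ij}=(k-1)G_{k-1}-(l-1)G_{l-1}+E_{k-1}-E_{l-1}$.
Since $k-1\ge l-1\ge0$ and $G_{k-1}\ge G\ge G_{l-1}$ by Lemma~\ref{lem:quotient-algebra}, this is at least $(k-l)G+E_{k-1}-E_{l-1}$.

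Subtracting $GF^{ij}b_{ij}=(k-l)G-\alpha(E_{k-1}-E_{l-1})$ gives
$GF^{ij}(\delta_{ij}-b_{ij})\ge(1+\alpha)(E_{k-1}-E_{l-1})\ge0$.
Combined with your other bound, this yields
$\tfrac12F^{ij}\nabla^2_{ij}P\ge(1+\alpha)(\alpha+Ku)(E_{k-1}-E_{l-1})/G\ge0$, which is exactly \eqref{eq:quotient-LP}.

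Your first suggested rewriting also works once you use this identity. Move one copy each of $G_{k-1}-G$ and $G-G_{l-1}$ out of $k(G_{k-1}-G)+l(G-G_{l-1})$. The identity then absorbs $-\alpha E_{k-2}$ and $+\alpha E_{l-2}$, and you are left with $(k-1)(G_{k-1}-G)+(l-1)(G-G_{l-1})+(1+\alpha)(E_{k-1}-E_{l-1})\ge0$.
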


\begin{proof}
Differentiation gives
\begin{equation}\label{eq:P-gradient}
 \frac12\nabla_iP
 =u_mu_{mi}+Kuu_i-u_i=b_{mi}u_m-u_i.
\end{equation}
Using the Codazzi identity \eqref{eq:Codazzi}, we obtain
\begin{align}
 \frac12\nabla_{ij}^2P
 &=(\nabla_jb_{mi})u_m+b_{mi}u_{mj}-u_{ij}\notag\\
 &=(\nabla_mb_{ij})u_m
   +b_{mi}(b_{mj}-Ku\delta_{mj})
   -(b_{ij}-Ku\delta_{ij})\notag\\
 &=(\nabla_mb_{ij})u_m+b_{mi}b_{mj}
   -(Ku+1)b_{ij}+Ku\delta_{ij}.
 \label{eq:P-Hessian}
\end{align}
The equation $F=1$ implies
$F^{ij}\nabla_mb_{ij}=0$. On the solution,
$$
 F^{ij}=\frac{G_k^{ij}-G_l^{ij}}G.
$$
The identities \eqref{eq:Euler-G}--\eqref{eq:square-G} therefore give
\begin{align}
 F^{ij}b_{ij}
 &=k-l-\alpha\frac{E_{k-1}-E_{l-1}}G,
 \label{eq:Fb}\\
 F^{ij}\delta_{ij}
 &=\frac{(k-1)G_{k-1}-(l-1)G_{l-1}
          +E_{k-1}-E_{l-1}}G,
 \label{eq:Ftrace}\\
 F^{ij}b_{mi}b_{mj}
 &=\frac{-(n-k)G_{k+1}+(n-l)G_{l+1}
          -\alpha(E_k-E_l)}G.
 \label{eq:Fsquare}
\end{align}
The term containing $G_{k+1}$ is absent when $k=n$.

Lemma~\ref{lem:quotient-algebra} yields
$$
 \begin{aligned}
 F^{ij}\delta_{ij}
 &\geq k-l+\frac{E_{k-1}-E_{l-1}}G,\\
 F^{ij}b_{mi}b_{mj}
 &\geq k-l+\alpha^2\frac{E_{k-1}-E_{l-1}}G.
 \end{aligned}
$$
Contracting \eqref{eq:P-Hessian}, and using $Ku\geq0$ in
the trace term, gives
\begin{align}
 \frac12F^{ij}\nabla_{ij}^2P
 &\geq k-l+\alpha^2\frac{E_{k-1}-E_{l-1}}G\notag\\
 &\quad-(Ku+1)
    \left(k-l-\alpha\frac{E_{k-1}-E_{l-1}}G\right)\notag\\
 &\quad+Ku\left(k-l+\frac{E_{k-1}-E_{l-1}}G\right)\notag\\
 &=\frac{(1+\alpha)(\alpha+Ku)}G
       (E_{k-1}-E_{l-1})\geq0.
 \label{eq:quotient-LP}
\end{align}
The operator is uniformly elliptic on $\overline\Omega$.
The maximum principle, its strong form, and
\eqref{eq:P-boundary} give the assertions;
see \cite[Chapter~3]{GilbargTrudinger}.
\end{proof}

The second function is
\begin{equation}\label{eq:Ptilde-definition}
 \widetilde P=-\langle\nabla u,\nabla\Phi\rangle+uV+\Phi.
\end{equation}
It is the space-form auxiliary function of
\cite[Lemma~4.2]{GaoMaYang}, based on the spherical construction
of Qiu and Xia \cite{QiuXia}.

\begin{lemma}\label{lem:quotient-Ptilde}
For a solution of \eqref{eq:quotient-problem},
$$
 F^{ij}\nabla_{ij}^2\widetilde P\geq0\quad\text{in }\Omega.
$$
If $\widetilde P$ is constant, then $b=g$.
\end{lemma}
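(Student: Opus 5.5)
The plan is to compute $F^{ij}\nabla^2_{ij}\widetilde P$ directly, in the same way as in Lemma~\ref{lem:quotient-P}. The space-form identities \eqref{eq:space-form-identities} should remove every first-order term in $u$, and the Codazzi property \eqref{eq:Codazzi} together with the equation should remove the third-derivative term. What remains will be a multiple of $F^{ij}\delta_{ij}-F^{ij}b_{ij}$, which we control with Lemma~\ref{lem:quotient-algebra}.

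\emph{First derivative.} We have
$$
 \nabla_i\widetilde P=-u_{mi}\Phi_m-u_m\Phi_{mi}+u_iV+uV_i+\Phi_i .
$$
Using $\Phi_{mi}=V\delta_{mi}$ and $V_i=-K\Phi_i$, the terms $\pm Vu_i$ cancel. The remaining terms combine with $-u_{mi}\Phi_m$ into
$$
 \nabla_i\widetilde P=-b_{mi}\Phi_m+\Phi_i .
$$

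\emph{Second derivative.} Differentiating once more, applying \eqref{eq:Codazzi} to write $\nabla_jb_{mi}=\nabla_mb_{ij}$, and using $\Phi_{mj}=V\delta_{mj}$ gives
$$
 \nabla^2_{ij}\widetilde P=-(\nabla_mb_{ij})\Phi_m-Vb_{ij}+V\delta_{ij}.
$$
Contracting with $F^{ij}$, the first term vanishes because $F(b)\equiv1$ implies $F^{ij}\nabla_mb_{ij}=0$. Hence
$$
 F^{ij}\nabla^2_{ij}\widetilde P=V\bigl(F^{ij}\delta_{ij}-F^{ij}b_{ij}\bigr).
$$

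\emph{Sign of the right-hand side.} Put $D=E_{k-1}-E_{l-1}$. By \eqref{eq:Ftrace} and the bounds $G_{k-1}\geq G$ and $G_{l-1}\leq G$ from \eqref{eq:quotient-neighbours},
$$
 F^{ij}\delta_{ij}\geq k-l+\frac DG .
$$
By \eqref{eq:Fb},
$$
 F^{ij}b_{ij}=k-l-\alpha\frac DG .
$$
Therefore
$$
 F^{ij}\nabla^2_{ij}\widetilde P\geq\frac{(1+\alpha)V}{G}\,D .
$$
This is nonnegative: $V>0$ on $\overline\Omega$ by the hemisphere assumption, $G\geq1$ by Lemma~\ref{lem:automatic-cone}, and $D\geq0$ by \eqref{eq:quotient-E}. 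Note that here we do not even need the sign condition $Ku\geq0$.

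\emph{Equality case.} If $\widetilde P$ is constant, the left-hand side vanishes identically. Then $(1+\alpha)VD/G\leq0$, which forces $D=E_{k-1}-E_{l-1}=0$ at every point. The last assertion of Lemma~\ref{lem:quotient-algebra} then gives $b=I$, that is, $b=g$, throughout $\Omega$.

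The main point to check is the exact cancellation in $\nabla\widetilde P$, which relies on the identities $\nabla^2\Phi=Vg$ and $\nabla V=-K\nabla\Phi$. Once that is verified, the rest is bookkeeping with \eqref{eq:Fb}, \eqref{eq:Ftrace} and Lemma~\ref{lem:quotient-algebra}.
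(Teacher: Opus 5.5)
Your proposal is correct and follows the paper's proof essentially line by line: the same gradient identity $\nabla_i\widetilde P=(\delta_{im}-b_{im})\Phi_m$, the Codazzi identity plus the differentiated equation to remove the third-order term, the lower bound $(1+\alpha)(E_{k-1}-E_{l-1})/G$ from \eqref{eq:Fb}, \eqref{eq:Ftrace} and \eqref{eq:quotient-neighbours}, and Lemma~\ref{lem:quotient-algebra} for the equality case. Your aside that $Ku\geq0$ is not needed is true only for this computation: the facts $b\in\widetilde\Gamma_k$, $G\geq1$ and $E_{k-1}-E_{l-1}\geq0$, and the applicability of Lemma~\ref{lem:quotient-algebra}, all come from Lemma~\ref{lem:automatic-cone}, whose proof uses $Ku\geq0$ at the interior minimum.
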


\begin{proof}
The identities \eqref{eq:space-form-identities} give
\begin{align}
 \nabla_i\widetilde P
 &=-u_{mi}\Phi_m-u_m\Phi_{mi}+u_iV+uV_i+\Phi_i\notag\\
 &=-u_{mi}\Phi_m-Ku\Phi_i+\Phi_i
   =(\delta_{im}-b_{im})\Phi_m.
 \label{eq:Ptilde-gradient}
\end{align}
Differentiating once more and using \eqref{eq:Codazzi}, we find
$$
 \nabla_{ij}^2\widetilde P
 =-\Phi_m\nabla_mb_{ij}+V(\delta_{ij}-b_{ij}).
$$
After contraction, the differentiated equation removes the
first term. Hence
\begin{equation}\label{eq:Ptilde-Hessian}
 F^{ij}\nabla_{ij}^2\widetilde P
 =V(F^{ij}\delta_{ij}-F^{ij}b_{ij}).
\end{equation}
Equations \eqref{eq:Fb}, \eqref{eq:Ftrace} and
\eqref{eq:quotient-neighbours} show that
\begin{equation}\label{eq:Ftrace-gap}
 F^{ij}\delta_{ij}-F^{ij}b_{ij}
 \geq(1+\alpha)\frac{E_{k-1}-E_{l-1}}G\geq0.
\end{equation}
This proves the differential inequality.
If $\widetilde P$ is constant, then
\eqref{eq:Ptilde-Hessian} and \eqref{eq:Ftrace-gap}, together
with $V>0$, $G>0$ and $1+\alpha>0$, imply
$E_{k-1}=E_{l-1}$. Lemma~\ref{lem:quotient-algebra} gives $b=g$.
\end{proof}

\begin{proof}[Proof of Theorem~\ref{thm:star-quotient}]
Choose $\Phi$ and $V$ with respect to the point $o$ in the
statement. Lemma~\ref{lem:quotient-P} gives $P\leq P_0$ in
$\overline\Omega$, with equality on $\partial\Omega$.
Differentiation along an inward normal curve therefore gives
$P_\nu\geq0$ on the boundary.

Since $\nabla u=c_2\nu$ there, \eqref{eq:P-gradient} yields
\begin{equation}\label{eq:boundary-P-normal}
 P_\nu=2c_2(b_{\nu\nu}-1).
\end{equation}
The mixed Hessian terms vanish on the boundary. Indeed, for
every tangent vector $\tau$,
$$
 0=\tau(u_\nu)
 =\nabla^2u(\tau,\nu)
   +\langle\nabla u,\nabla_\tau\nu\rangle
 =\nabla^2u(\tau,\nu).
$$
Equation \eqref{eq:Ptilde-gradient} consequently gives
$$
 \widetilde P_\nu=-(b_{\nu\nu}-1)\Phi_\nu
 =-\frac{\Phi_\nu}{2c_2}P_\nu\leq0,
$$
where we used strict star-shapedness.

By Lemma~\ref{lem:quotient-Ptilde},
$F^{ij}\nabla_{ij}^2\widetilde P\geq0$.
If $\widetilde P$ were nonconstant, the strong maximum
principle would place a maximum at the boundary, and the
Hopf lemma would give $\widetilde P_\nu>0$ there
\cite[Chapter~3]{GilbargTrudinger}. This contradicts the
preceding inequality. Hence $\widetilde P$ is constant.
Lemma~\ref{lem:quotient-Ptilde} gives $b=g$.
The rigidity argument in the final part of
\cite[Section~5]{GaoMaYang} shows that $\Omega$ is a
geodesic ball and $u$ is radial with respect to its center.
\end{proof}

\section{An integral identity and symmetry without star-shapedness}
\label{sec:integral}

We derive a Rellich--Poho\v{z}aev identity for both equations.
For the quotient equation, we use the difference of two Newton
linearizations. For the sum equation, we use the linearization
of $G_k$. The resulting identity proves
Theorem~\ref{thm:nonstar-quotient} and, after a separate estimate
for the sum equation, Theorem~\ref{thm:sum}.

For the equation under consideration, set
\begin{equation}\label{eq:H-definition}
 H^{ij}=
 \begin{cases}
 G_k^{ij}-G_l^{ij},&\text{for \eqref{eq:quotient-problem}},\\
 G_k^{ij},&\text{for \eqref{eq:sum-problem}}.
 \end{cases}
\end{equation}
All these tensors are evaluated at $b=\nabla^2u+Kug$.

\begin{lemma}\label{lem:H-properties}
For either equation, $H^{ij}$ is positive definite and satisfies
\begin{equation}\label{eq:H-properties}
 \nabla_jH^{ij}=0,\qquad H^{ij}\nabla_mb_{ij}=0.
\end{equation}
As an endomorphism, $H$ commutes with $b$ and with $\nabla^2u$.
\end{lemma}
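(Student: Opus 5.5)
We verify the three assertions separately, treating the quotient case $H^{ij}=G_k^{ij}-G_l^{ij}$ and the sum case $H^{ij}=G_k^{ij}$ in parallel.

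\emph{Positive definiteness.} In the sum case, Lemma~\ref{lem:automatic-cone} gives $b\in\widetilde\Gamma_k$ on $\overline\Omega$. Lemma~\ref{lem:individual-ellipticity} with $j=k$ then shows that $G_k^{ij}$ is positive definite.

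In the quotient case, we use the solution identity $G_k=G_l=G$. This gives
$$
 F^{ij}=\frac{G_k^{ij}-G_l^{ij}}{G},
 \qquad\text{so}\qquad
 H^{ij}=G\,F^{ij}.
$$
By Lemma~\ref{lem:automatic-cone}, $G>0$. By Proposition~\ref{prop:quotient-ellipticity}, $F^{ij}$ is positive definite on $\widetilde\Gamma_k$, since it is diagonal in an eigenframe of $b$ with positive entries \eqref{eq:quotient-eigenvalue-derivative}. Hence $H$ is positive definite.

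\emph{Divergence-free property.} By \eqref{eq:Newton-divfree}, $\nabla_jG_m^{ij}(b)=0$ for every $m$. This uses only the Codazzi property \eqref{eq:Codazzi} of $b$. The identity $\nabla_jH^{ij}=0$ then follows by linearity.

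\emph{The identity $H^{ij}\nabla_mb_{ij}=0$.} We differentiate the equation. For the sum problem, $G_k(b)=1+\alpha$ is constant, so
$$
 0=\nabla_m G_k(b)=G_k^{ij}\nabla_mb_{ij}.
$$
For the quotient problem, $G_k(b)-G_l(b)=0$ holds identically, because $G_l\ne0$ and the quotient equals $1$. Differentiating gives $(G_k^{ij}-G_l^{ij})\nabla_mb_{ij}=0$. This is cleaner than differentiating $F$ itself, although the two are equivalent via $F^{ij}=H^{ij}/G$.

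\emph{Commutation.} The normalized Newton recurrence at the end of Section~\ref{sec:preliminaries} shows that each $E_m^{ij}(b)$, viewed as an endomorphism, is a polynomial in $b$. The same then holds for $G_m^{ij}$ and for $H$, so $H$ commutes with $b$. Since
$$
 \nabla^2u=b-Ku\,g
$$
and the identity commutes with everything, $H$ also commutes with $\nabla^2u$.

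The only step needing any care is positive definiteness in the quotient case. The difference $G_k^{ij}-G_l^{ij}$ is not obviously positive, since it is a difference of two positive definite tensors. The point is that it is positive only because it equals $G\,F^{ij}$ on solutions, so the argument must use the solution identity $G_k=G_l$ together with Proposition~\ref{prop:quotient-ellipticity}.
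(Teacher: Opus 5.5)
Your proposal is correct and follows the paper's proof essentially step for step. Both arguments take the divergence identity from \eqref{eq:Newton-divfree} by linearity, obtain $H^{ij}\nabla_mb_{ij}=0$ by differentiating $G_k-G_l=0$ (respectively $G_k=1+\alpha$), and get commutation from the polynomial property and $\nabla^2u=b-Kug$. For positivity, both use Lemma~\ref{lem:individual-ellipticity} in the sum case, and in the quotient case the identity $H^{ij}=GF^{ij}$ with $G>0$ together with Proposition~\ref{prop:quotient-ellipticity}.
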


\begin{proof}
The divergence identity follows from
\eqref{eq:Newton-divfree}. For the quotient equation,
$G_k-G_l=0$ in $\Omega$, so
$$
 H^{ij}\nabla_mb_{ij}=\nabla_m(G_k-G_l)=0.
$$
For the sum equation, the same contraction is
$\nabla_mG_k=0$. The polynomial property of the Newton tensors
proved in Section~\ref{sec:preliminaries} gives commutation with
$b$. Since $\nabla^2u=b-Kug$, it also gives commutation with
$\nabla^2u$.

For the sum equation, positivity follows from
Lemma~\ref{lem:individual-ellipticity}. For the quotient equation,
$$
 H^{ij}=G F^{ij},
$$
where $G=G_k=G_l>0$. Proposition~\ref{prop:quotient-ellipticity}
then gives positivity.
\end{proof}

\begin{proposition}\label{prop:Pohozaev}
Let $u\in C^3(\overline\Omega)$ solve either
\eqref{eq:quotient-problem} or \eqref{eq:sum-problem}, and let
$H$ be given by \eqref{eq:H-definition}. Then
\begin{align}
 &\int_\Omega H^{ij}b_{ij}(u-Kc_1)V\,d\mu\notag\\
 &\qquad=
 \frac12\int_\Omega
 \bigl(|\nabla u|^2+Ku^2-K^3c_1^2-c_2^2\bigr)
 H^{ij}\delta_{ij}V\,d\mu.
 \label{eq:Pohozaev}
\end{align}
Equivalently, with $P$ and $P_0$ as in \eqref{eq:P-definition},
\begin{align}
 &\int_\Omega(Kc_1-u)
 \bigl(H^{ij}\delta_{ij}-H^{ij}b_{ij}\bigr)V\,d\mu\notag\\
 &\qquad=
 \frac12\int_\Omega(P-P_0)H^{ij}\delta_{ij}V\,d\mu.
 \label{eq:Pohozaev-P}
\end{align}
\end{proposition}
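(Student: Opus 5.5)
The plan is to obtain \eqref{eq:Pohozaev} by integrating the divergences of a few explicit vector fields built from $H$, $u$, $V$ and $\Phi$. The structural inputs are three. First, Lemma~\ref{lem:H-properties} gives $\nabla_jH^{ij}=0$ and $H^{ij}\nabla_mb_{ij}=0$. Second, the Codazzi identity \eqref{eq:Codazzi} combines with these to make $T^i_m:=H^{ij}b_{jm}$ divergence free:
$$\nabla_iT^i_m=H^{ij}\nabla_ib_{jm}=H^{ij}\nabla_mb_{ij}=0.$$
Third, the space-form identities \eqref{eq:space-form-identities} give $\nabla_i(H^{ij}\Phi_j)=VH^{ij}\delta_{ij}$, $\nabla V=-K\nabla\Phi$, and $\nabla_i\bigl(\tfrac12(|\nabla u|^2+Ku^2)\bigr)=b_{im}u_m$.

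For the left-hand side, I will take $X^i=(u-Kc_1)V\,H^{ij}u_j$. Its flux through $\partial\Omega$ vanishes because $u=Kc_1$ there. Writing $u_{ij}=b_{ij}-Ku\delta_{ij}$, its divergence is
$$(u-Kc_1)V\,H^{ij}b_{ij}\;-\;Ku(u-Kc_1)V\,H^{ij}\delta_{ij}\;+\;V\,H^{ij}u_iu_j\;-\;K(u-Kc_1)\,H^{ij}u_j\Phi_i .$$
So $\int_\Omega(u-Kc_1)VH^{ij}b_{ij}$ is expressed through integrals of $VH^{ij}u_iu_j$, of $Ku(u-Kc_1)VH^{ij}\delta_{ij}$, and of the cross term $H^{ij}u_j\Phi_i$.

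The remaining integrals will be converted by Rellich-type fields. The first is
$$Y^i=\tfrac12\bigl(|\nabla u|^2+Ku^2\bigr)H^{ij}\Phi_j,$$
whose divergence is $b_{im}u_mH^{ij}\Phi_j+\tfrac12(|\nabla u|^2+Ku^2)VH^{ij}\delta_{ij}$. The second is
$$Z^i=T^i_m u_m\,\Phi_\cdot\text{-type terms, e.g. } H^{ij}u_j\langle\nabla u,\nabla\Phi\rangle\ \text{ or }\ T^i_m\Phi_m\,u,$$
whose divergence, using commutation of $H$ with $b$ and $\nabla^2u$ from Lemma~\ref{lem:H-properties}, produces $H^{ij}b_{jm}u_m\Phi_i$, $VH^{ij}u_iu_j$ and $KuH^{ij}u_j\Phi_i$. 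I will also use $(u-Kc_1)V$-weighted versions as needed.

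On $\partial\Omega$ we have $\nabla u=c_2\nu$ and $u=Kc_1$. The boundary terms therefore reduce to constants times $\int_{\partial\Omega}H^{ij}\nu_i\Phi_j$:
$$\tfrac12(c_2^2+K^3c_1^2)\int_{\partial\Omega}H^{ij}\nu_i\Phi_j\qquad\text{and}\qquad c_2^2\int_{\partial\Omega}H^{\nu\nu}\Phi_\nu .$$
The latter equals $c_2^2\int_{\partial\Omega}H^{ij}\nu_i\Phi_j$, since $H^{ij}u_j=c_2H^{ij}\nu_j$. Each such term is turned back into an interior integral by
$$\int_{\partial\Omega}H^{ij}\nu_i\Phi_j=\int_\Omega VH^{ij}\delta_{ij}.$$
This is the source of the constant $K^3c_1^2+c_2^2$ in \eqref{eq:Pohozaev}.

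The main obstacle is bookkeeping: choosing the coefficients of $X$, $Y$, $Z$ so that every non-sign-definite term cancels. These are the mixed terms $H^{ij}b_{jm}u_m\Phi_i$, $KuH^{ij}u_j\Phi_i$ and $VH^{ij}u_iu_j$. I would first try $X-Y+Z$ with $Z=H^{ij}u_j\langle\nabla u,\nabla\Phi\rangle$, and then adjust with the field $Ku^2H^{ij}\Phi_j$ to absorb the $K$-terms, checking each cancellation against $\nabla V=-K\nabla\Phi$. Note that $H^{ij}b_{ij}$ is not constant in the quotient case, so no step may use the equation beyond $H^{ij}\nabla_mb_{ij}=0$.

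Finally, \eqref{eq:Pohozaev-P} follows algebraically from \eqref{eq:Pohozaev}. The key observation is
$$P-P_0=\bigl(|\nabla u|^2+Ku^2-K^3c_1^2-c_2^2\bigr)-2(u-Kc_1).$$
Hence $\tfrac12\int(P-P_0)H^{ij}\delta_{ij}V$ equals the right side of \eqref{eq:Pohozaev} minus $\int(u-Kc_1)H^{ij}\delta_{ij}V$. Adding $\int(Kc_1-u)H^{ij}\delta_{ij}V$ to both sides of \eqref{eq:Pohozaev} then gives the stated form.
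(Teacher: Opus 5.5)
Your ingredients are the right ones: the divergence-free tensor $T^i_m=H^{ij}b_{jm}$, the Rellich field $Y$, and the flux conversion $\int_{\partial\Omega}H^{ij}\nu_i\Phi_j=\int_\Omega VH^{ij}\delta_{ij}$. Your computation of $\nabla_iX^i$ is correct, and so is the final algebra leading from \eqref{eq:Pohozaev} to \eqref{eq:Pohozaev-P}. But the proof consists of the choice of vector fields, and you leave that open. The combination you say you would try first does not close. For $Z^i=H^{ij}u_j\langle\nabla u,\nabla\Phi\rangle$ one gets
\[
 \nabla_iZ^i=\bigl(H^{ij}b_{ij}-Ku\,H^{ij}\delta_{ij}\bigr)\langle\nabla u,\nabla\Phi\rangle
 +H^{ij}b_{jm}u_i\Phi_m-Ku\,H^{ij}u_j\Phi_i+V\,H^{ij}u_iu_j ,
\]
and your description of this divergence omits the first term. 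No other field on your list produces $H^{ij}b_{ij}\langle\nabla u,\nabla\Phi\rangle$ or $u\,H^{ij}\delta_{ij}\langle\nabla u,\nabla\Phi\rangle$, and adding $Ku^2H^{ij}\Phi_j$ does not touch them. In the Euclidean Rellich argument this term is harmless because $\Delta u$ (or $\sigma_k^{ij}u_{ij}$) is constant. Here $H^{ij}b_{ij}$ is not constant, as you note yourself. The term $VH^{ij}u_iu_j$ in $\nabla_iX^i$ has no partner other than $Z$, so $X$ does not help either. Separately, your boundary identity $H^{\nu\nu}\Phi_\nu=H^{ij}\nu_i\Phi_j$ requires $\nu$ to be an eigenvector of $H$. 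That follows from $\nabla^2u(\tau,\nu)=0$ on $\partial\Omega$ together with $H$ being a polynomial in $b$, not from $H^{ij}u_j=c_2H^{ij}\nu_j$.

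The missing step is to let $T$ act on $(u-Kc_1)\nabla\Phi$. Set $W^i=(u-Kc_1)H^{ij}b_{jm}\Phi_m$. Because $\nabla_iT^i_m=0$ and $\Phi_{mi}=V\delta_{mi}$,
\[
 \nabla_iW^i=(u-Kc_1)V\,H^{ij}b_{ij}+H^{ij}b_{jm}u_i\Phi_m ,
\]
and $W$ has zero flux since $u=Kc_1$ on $\partial\Omega$. Your field $Y$, together with the flux conversion, gives
\[
 \int_\Omega H^{ij}b_{jm}u_m\Phi_i\,d\mu
 =\frac12\int_\Omega\bigl(K^3c_1^2+c_2^2-|\nabla u|^2-Ku^2\bigr)H^{ij}\delta_{ij}V\,d\mu .
\]
Since $H$ commutes with $b$, the two mixed terms are equal, and subtracting yields \eqref{eq:Pohozaev}; neither $X$ nor $Z$ is needed. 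The paper uses the same mechanism but starts from $H^{ij}u_{il}u\Phi_l$ instead of $W$. It splits off the Rellich fields $\frac K2u^2H^{ij}\Phi_i$ and $\frac12|\nabla u|^2H^{ij}\Phi_i$. It then converts the boundary terms $Kc_1\int_{\partial\Omega}H^{ij}u_{il}\Phi_l\nu_j$ and $\frac12(K^3c_1^2-c_2^2)\int_{\partial\Omega}H^{ij}\Phi_i\nu_j$ back into interior integrals, using the same divergence-free property of $H^{ij}b_{jm}$.
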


\begin{proof}
We follow the integration by parts used for the $k$-Hessian
equation in \cite[Section~6]{GaoMaYang}. The proof below uses
only the properties in Lemma~\ref{lem:H-properties}.
Since $\Phi_{lj}=V\delta_{lj}$ and
$b_{ij}=u_{ij}+Ku\delta_{ij}$,
\begin{align}
 H^{ij}b_{ij}uV
 &=\nabla_j(H^{ij}u_{il}u\Phi_l)
   -H^{ij}(\nabla_ju_{il})u\Phi_l\notag\\
 &\quad-H^{ij}u_{il}u_j\Phi_l
   +Ku^2VH^{ij}\delta_{ij}.
 \label{eq:Pohozaev-start}
\end{align}
The Codazzi identity and \eqref{eq:H-properties} imply
$$
 \begin{aligned}
 H^{ij}(\nabla_ju_{il})\Phi_l
 &=H^{ij}(\nabla_jb_{il}-Ku_j\delta_{il})\Phi_l\\
 &=H^{ij}\nabla_lb_{ij}\,\Phi_l-KH^{ij}u_j\Phi_i\\
 &=-KH^{ij}u_j\Phi_i.
 \end{aligned}
$$
Thus
\begin{equation}\label{eq:Pohozaev-u-square}
 -H^{ij}(\nabla_ju_{il})u\Phi_l
 =\frac K2\nabla_j(H^{ij}\Phi_i u^2)
  -\frac K2H^{ij}\delta_{ij}u^2V.
\end{equation}

Commutation of $H$ and $\nabla^2u$ gives
$$
 H^{ij}u_{il}u_j\Phi_l
 =H^{li}u_{ij}u_j\Phi_l
 =\frac12H^{li}\Phi_l\nabla_i|\nabla u|^2.
$$
Another application of \eqref{eq:H-properties} gives
\begin{equation}\label{eq:Pohozaev-gradient}
 -H^{ij}u_{il}u_j\Phi_l
 =-\frac12\nabla_i(H^{li}\Phi_l|\nabla u|^2)
  +\frac12H^{ij}\delta_{ij}|\nabla u|^2V.
\end{equation}
Substituting \eqref{eq:Pohozaev-u-square} and
\eqref{eq:Pohozaev-gradient} into \eqref{eq:Pohozaev-start},
and applying the divergence theorem, yields
\begin{align}
 \int_\Omega H^{ij}b_{ij}uV\,d\mu
 &=Kc_1\int_{\partial\Omega}H^{ij}u_{il}\Phi_l\nu_j\,dS
 \notag\\
 &\quad+\frac{K^3c_1^2-c_2^2}{2}
     \int_{\partial\Omega}H^{ij}\Phi_i\nu_j\,dS\notag\\
 &\quad+\frac12\int_\Omega(Ku^2+|\nabla u|^2)
          H^{ij}\delta_{ij}V\,d\mu.
 \label{eq:Pohozaev-boundary}
\end{align}
Here we used $u=Kc_1$ and $|\nabla u|^2=c_2^2$ on
$\partial\Omega$.

We now evaluate the two boundary integrals. First,
\begin{equation}\label{eq:boundary-Phi}
 \int_{\partial\Omega}H^{ij}\Phi_i\nu_j\,dS
 =\int_\Omega H^{ij}\Phi_{ij}\,d\mu
 =\int_\Omega H^{ij}\delta_{ij}V\,d\mu.
\end{equation}
Second, the Codazzi identity gives
$$
 \begin{aligned}
 \int_{\partial\Omega}H^{ij}b_{il}\Phi_l\nu_j\,dS
 &=\int_\Omega
    \left(H^{ij}\nabla_jb_{il}\,\Phi_l
          +H^{ij}b_{il}\Phi_{lj}\right)d\mu\\
 &=\int_\Omega
    \left(H^{ij}\nabla_lb_{ij}\,\Phi_l
          +H^{ij}b_{ij}V\right)d\mu\\
 &=\int_\Omega H^{ij}b_{ij}V\,d\mu.
 \end{aligned}
$$
On the boundary,
$u_{il}=b_{il}-K^2c_1\delta_{il}$. Therefore
\begin{equation}\label{eq:boundary-uHess}
 \int_{\partial\Omega}H^{ij}u_{il}\Phi_l\nu_j\,dS
 =\int_\Omega
   \bigl(H^{ij}b_{ij}-K^2c_1H^{ij}\delta_{ij}\bigr)V\,d\mu.
\end{equation}
Substitution into \eqref{eq:Pohozaev-boundary} gives
$$
 \begin{aligned}
 \int_\Omega H^{ij}b_{ij}uV\,d\mu
 &=Kc_1\int_\Omega H^{ij}b_{ij}V\,d\mu\\
 &\quad+\frac12\int_\Omega
  \bigl(|\nabla u|^2+Ku^2-K^3c_1^2-c_2^2\bigr)
  H^{ij}\delta_{ij}V\,d\mu.
 \end{aligned}
$$
This is \eqref{eq:Pohozaev}. Finally,
$$
 |\nabla u|^2+Ku^2-K^3c_1^2-c_2^2
 =P-P_0+2(u-Kc_1).
$$
Using this relation in \eqref{eq:Pohozaev} gives
\eqref{eq:Pohozaev-P}.
\end{proof}

\begin{proof}[Proof of Theorem~\ref{thm:nonstar-quotient}]
Lemma~\ref{lem:quotient-algebra} gives $E_1(b)\geq1$.
For $w=(u-Kc_1)/V$, identity \eqref{eq:positive-transform}
and the trace relation $\Delta u+nKu=nE_1(b)$ give
\begin{equation}\label{eq:nonstar-w}
 \Delta w+\frac2V\langle\nabla V,\nabla w\rangle
 =\frac{n(E_1-K^2c_1)}V\geq0.
\end{equation}
The boundary value of $w$ is zero. The function is not constant,
since $w_\nu=c_2/V>0$ on $\partial\Omega$.
The strong maximum principle therefore gives
\begin{equation}\label{eq:nonstar-u-less}
 u<Kc_1\quad\text{in }\Omega.
\end{equation}

For the quotient equation, $H^{ij}=G F^{ij}$.
By \eqref{eq:Ftrace-gap},
\begin{equation}\label{eq:quotient-H-gap}
 H^{ij}\delta_{ij}-H^{ij}b_{ij}
 \geq(1+\alpha)(E_{k-1}-E_{l-1})\geq0.
\end{equation}
Lemma~\ref{lem:quotient-P} gives $P\leq P_0$.
In \eqref{eq:Pohozaev-P}, the left-hand side is nonnegative
by \eqref{eq:nonstar-u-less} and \eqref{eq:quotient-H-gap}.
The right-hand side is nonpositive because
$H^{ij}\delta_{ij}>0$ and $V>0$. Both sides must vanish.
Since $Kc_1-u>0$ in $\Omega$, we obtain
$$
 H^{ij}\delta_{ij}-H^{ij}b_{ij}=0
 \quad\text{in }\Omega.
$$
Equation \eqref{eq:quotient-H-gap} now gives
$E_{k-1}=E_{l-1}$. Lemma~\ref{lem:quotient-algebra} implies
$b=g$. The rigidity argument in the final part of
\cite[Section~5]{GaoMaYang} shows that $\Omega$ is a
geodesic ball and $u$ is radial with respect to its center.
\end{proof}

For the sum equation, the estimates for the quotient do not
directly give the required differential inequality for $P$.
We prove it separately. In the rest of this section,
$u$ solves \eqref{eq:sum-problem}.

\begin{lemma}\label{lem:sum-P}
The function $P$ in \eqref{eq:P-definition} satisfies
\begin{equation}\label{eq:sum-LP}
 G_k^{ij}\nabla_{ij}^2P\geq0.
\end{equation}
Hence $P\leq P_0$ on $\overline\Omega$.
If $P$ is constant, then $b=g$.
For $k\geq2$, one also has
\begin{equation}\label{eq:sum-basic}
 E_1\geq1,\qquad E_{k-1}\geq1,\qquad E_{k-2}\geq1
\end{equation}
and
\begin{equation}\label{eq:sum-trace-gap}
 G_k^{ij}\delta_{ij}-G_k^{ij}b_{ij}
 \geq(k+\alpha)(E_{k-1}-1)\geq0.
\end{equation}
\end{lemma}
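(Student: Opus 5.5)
The plan is to contract the Hessian formula \eqref{eq:P-Hessian} for $P$ with $G_k^{ij}$, as in Lemma~\ref{lem:quotient-P}, and then bound each resulting term using only the equation $G_k=1+\alpha$ and the ratio inequalities \eqref{eq:G-ratios}. The first step is the algebraic estimate \eqref{eq:sum-basic} for $k\geq2$. By Lemma~\ref{lem:automatic-cone} and Lemma~\ref{lem:cone-inclusion}, $b\in\Gamma_{k-1}$.

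\emph{Step 1: \eqref{eq:sum-basic}.} There are two cases.
\begin{itemize}
\item If $E_k\leq0$, then $\alpha E_{k-1}=1+\alpha-E_k\geq1+\alpha$. In particular $\alpha>0$, and $E_{k-1}>1$.
\item If $E_k>0$, then $b\in\Gamma_k$, and \eqref{eq:Maclaurin} gives $E_k\leq x^k$ and $E_{k-1}=x^{k-1}$, where $x=E_{k-1}^{1/(k-1)}$. Then $1+\alpha\leq x^{k-1}(x+\alpha)$, which forces $x\geq1$.
\end{itemize}
Given $E_{k-1}\geq1$, Maclaurin in $\Gamma_{k-1}$ yields $E_{k-2}\geq E_{k-1}^{(k-2)/(k-1)}\geq1$ and $E_1\geq E_{k-1}^{1/(k-1)}\geq1$. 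For $k=1$, the equation reads $E_1=1$ directly.

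\emph{Step 2: \eqref{eq:sum-trace-gap}.} By \eqref{eq:Euler-G} and \eqref{eq:trace-G},
$$
 G_k^{ij}\delta_{ij}-G_k^{ij}b_{ij}
 =kE_{k-1}+(k-1)\alpha E_{k-2}-k(1+\alpha)+\alpha E_{k-1}.
$$
This equals $(k+\alpha)(E_{k-1}-1)+(k-1)\alpha(E_{k-2}-1)$. It is bounded below by $(k+\alpha)(E_{k-1}-1)\geq0$ by Step 1.

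\emph{Step 3: \eqref{eq:sum-LP}.} The differentiated equation $G_k^{ij}\nabla_mb_{ij}=0$ removes the third-order term in \eqref{eq:P-Hessian}. This leaves
$$
 \tfrac12G_k^{ij}\nabla^2_{ij}P
 =\bigl(G_k^{ij}b_{mi}b_{mj}-G_k^{ij}b_{ij}\bigr)
 +Ku\bigl(G_k^{ij}\delta_{ij}-G_k^{ij}b_{ij}\bigr).
$$
The second bracket is nonnegative by Step 2 and $Ku\geq0$. For the first bracket, \eqref{eq:square-G} and \eqref{eq:Euler-G} give
$$
 n(1+\alpha)E_1-(n-k)G_{k+1}-\alpha E_k-k(1+\alpha)+\alpha E_{k-1}.
$$
The only term without a favourable sign is $-(n-k)G_{k+1}$, and this is the main obstacle. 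I would control it through \eqref{eq:G-ratios} and Ren's inequality:
$$
 G_{k+1}\leq\frac{G_k^2}{G_{k-1}}=\frac{(1+\alpha)^2}{G_{k-1}}.
$$
By Step 1, $G_{k-1}=E_{k-1}+\alpha E_{k-2}\geq1+\alpha$, so $G_{k+1}\leq1+\alpha$. For $k=1$, \eqref{eq:first-Newton} gives $G_2\leq G_1^2=(1+\alpha)^2$ instead, and this case is handled analogously with $E_1=1$. Substituting $E_k=1+\alpha-\alpha E_{k-1}$, the first bracket is bounded below by
$$
 n(1+\alpha)(E_1-1)+\alpha(1+\alpha)(E_{k-1}-1)\geq0.
$$
Ellipticity of $G_k^{ij}$ follows from Lemma~\ref{lem:individual-ellipticity}. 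The maximum principle and \eqref{eq:P-boundary} then give $P\leq P_0$.

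\emph{Step 4: rigidity.} If $P$ is constant, every discarded quantity vanishes. For $k\geq2$ this gives $E_1=1=E_{k-1}$. This is equality in the Maclaurin chain \eqref{eq:Maclaurin} between distinct positive orders, so all eigenvalues are equal. The equation then forces the common eigenvalue to be $1$, hence $b=g$. For $k=1$, vanishing of the estimate in Step 3 must force equality in Newton's inequality $E_1^2\geq E_2$, with $E_1=1$, which again gives $b=g$. The delicate points are this $k=1$ bookkeeping and making sure equality propagates through the bound on $G_{k+1}$; I expect these to need the most care.
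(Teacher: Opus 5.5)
Your proof is correct for $k\ge2$ and follows the paper's overall scheme. The difference is in how you control $-(n-k)G_{k+1}$. There are also two small steps that fail as written.

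\textbf{Your route versus the paper's.} The paper expands $G_{k+1}=E_{k+1}+\alpha E_k$ and uses Newton's inequality $E_{k+1}\le E_k^2/E_{k-1}$ in the $E$'s. It then regroups the quadratic contraction into an exact sum of three nonnegative terms. You instead apply Ren's inequality \eqref{eq:Ren} at index $k$, together with $G_{k-1}=E_{k-1}+\alpha E_{k-2}\ge1+\alpha$ from Step 1. This gives $G_{k+1}\le1+\alpha$ and the cruder bound $n(1+\alpha)(E_1-1)+\alpha(1+\alpha)(E_{k-1}-1)$.

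That is shorter than the paper's regrouping and keeps the $E_1-1$ term, which is all rigidity needs. So you do not have to track equality through the $G_{k+1}$ bound at all. The paper's identity retains more information at the cost of the algebra. Steps 1 and 2 agree with the paper; Step 1 is simply a direct version of its contradiction argument.

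\textbf{First slip: the case $k=1$.} The bound $G_2\le G_1^2=(1+\alpha)^2$ is too weak. With $E_1=1$ the first bracket equals $(n-1)(1+\alpha-G_2)$. Your bound then only gives a lower bound of $-(n-1)\alpha(1+\alpha)$, which is negative when $\alpha>0$.

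You must keep the $\alpha G_1$ term in \eqref{eq:first-Newton}. Then $G_2=E_2+\alpha\le E_1^2+\alpha=1+\alpha$. The bracket becomes $(n-1)(1-E_2)=\frac1n|b|^2-1\ge0$, which is the paper's Cauchy--Schwarz computation. This also gives your Step 4 for $k=1$.

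\textbf{Second slip: rigidity for $k=2$.} There, $E_1=1=E_{k-1}$ is a single condition, not an equality between distinct Maclaurin orders, so it does not force equal eigenvalues. Moreover, when $\alpha=0$ the vanishing of your lower bound yields only $E_1=1$.

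The fix, as in the paper, covers all cases:
\begin{itemize}
\item From $E_1=1$ and Step 1, $1\le E_{k-1}\le E_1^{k-1}=1$, so $E_{k-1}=1$.
\item The equation then gives $E_k=1+\alpha-\alpha E_{k-1}=1$, so $b\in\Gamma_k$.
\item Equality $E_k^{1/k}=E_1$ holds between the distinct orders $1$ and $k$, so the equality case of \eqref{eq:Maclaurin} yields $b=g$.
\end{itemize}
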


\begin{proof}
Lemma~\ref{lem:automatic-cone} gives
$b\in\widetilde\Gamma_k$ on $\overline\Omega$.
The operator $G_k^{ij}\nabla_{ij}^2$ is uniformly elliptic by
Lemma~\ref{lem:individual-ellipticity}.

If $k=1$, the equation is $E_1=1$ and
$G_1^{ij}=\delta_{ij}/n$. Contracting
\eqref{eq:P-Hessian}, and differentiating $E_1=1$, gives
$$
 \frac12G_1^{ij}\nabla_{ij}^2P
 =\frac1n|b|^2-1\geq0.
$$
The last inequality is the Cauchy--Schwarz inequality
$|b|^2\geq(\operatorname{tr}b)^2/n=n$.
Equality holds only when $b=g$. The maximum principle
gives the assertions in this case.

Assume now that $k\geq2$.
We first prove $E_{k-1}\geq1$.
Lemma~\ref{lem:cone-inclusion} gives $E_{k-1}>0$.
If $E_{k-1}<1$ and $E_k\leq0$, then
$E_k+\alpha E_{k-1}<1+\alpha$, contrary to the equation.
If $E_{k-1}<1$ and $E_k>0$, then $b\in\Gamma_k$, and
the Maclaurin inequality \eqref{eq:Maclaurin} gives
$$
 E_k\leq E_{k-1}^{k/(k-1)}<1.
$$
Again $G_k<1+\alpha$, a contradiction.
Thus $E_{k-1}\geq1$. Applying \eqref{eq:Maclaurin}
in $\Gamma_{k-1}$ gives $E_1\geq1$ and, if $k\geq3$,
$E_{k-2}\geq1$. For $k=2$, the last assertion is
$E_0=1$. This proves \eqref{eq:sum-basic}.

The equation also gives
\begin{equation}\label{eq:sum-Ek}
 E_k=1-\alpha(E_{k-1}-1).
\end{equation}
Using \eqref{eq:Euler-G}, \eqref{eq:trace-G} and
\eqref{eq:sum-Ek}, we obtain the exact identity
\begin{align}
 G_k^{ij}\delta_{ij}-G_k^{ij}b_{ij}
 &=k(E_{k-1}-E_k)
   +\alpha(k-1)(E_{k-2}-E_{k-1})\notag\\
 &=(k+\alpha)(E_{k-1}-1)
   +\alpha(k-1)(E_{k-2}-1)\geq0.
 \label{eq:sum-trace-exact}
\end{align}
This proves \eqref{eq:sum-trace-gap}.

We next estimate the quadratic contraction.
Equations \eqref{eq:Euler-G}, \eqref{eq:square-G} and
\eqref{eq:sum-Ek} give
\begin{align}
 &G_k^{ij}b_{mi}b_{mj}-G_k^{ij}b_{ij}\notag\\
 &=n(1+\alpha)E_1-(n-k)E_{k+1}
   -\alpha(n-k+1)E_k-k(1+\alpha)+\alpha E_{k-1}\notag\\
 &=n(1+\alpha)(E_1-1)+(n-k)(1-E_{k+1})\notag\\
 &\quad+
   \bigl(\alpha^2(n-k+1)+\alpha\bigr)(E_{k-1}-1).
 \label{eq:sum-square-first}
\end{align}
Newton's inequality \eqref{eq:Newton} gives
$$
 E_{k+1}\leq\frac{E_k^2}{E_{k-1}}.
$$
For $k=n$, we use $E_{n+1}=0$ and the terms multiplied
by $n-k$ vanish. Notice that this use of Newton's inequality
does not require $E_k>0$.

To separate the nonnegative terms in
\eqref{eq:sum-square-first}, write
$$
 1-E_{k+1}
 =1-\frac{E_k^2}{E_{k-1}}
  +\frac{E_k^2}{E_{k-1}}-E_{k+1}.
$$
Equation \eqref{eq:sum-Ek} gives
$$
 1-\frac{E_k^2}{E_{k-1}}
 =(E_{k-1}-1)
   \left(\frac{(1+\alpha)^2}{E_{k-1}}-\alpha^2\right).
$$
Consequently,
$$
 \begin{aligned}
 &(n-k)\left(1-\frac{E_k^2}{E_{k-1}}\right)
   +\bigl(\alpha^2(n-k+1)+\alpha\bigr)(E_{k-1}-1)\\
 &\qquad=
 (1+\alpha)
 \left(\alpha+\frac{(n-k)(1+\alpha)}{E_{k-1}}\right)
 (E_{k-1}-1).
 \end{aligned}
$$
Substitution into \eqref{eq:sum-square-first} now yields
\begin{align}
 &G_k^{ij}b_{mi}b_{mj}-G_k^{ij}b_{ij}\notag\\
 &=n(1+\alpha)(E_1-1)
   +(n-k)\left(\frac{E_k^2}{E_{k-1}}-E_{k+1}\right)\notag\\
 &\quad+
 (1+\alpha)
 \left(\alpha+\frac{(n-k)(1+\alpha)}{E_{k-1}}\right)
 (E_{k-1}-1)\geq0.
 \label{eq:sum-square-gap}
\end{align}
Each term on the right is nonnegative by
\eqref{eq:sum-basic} and Newton's inequality.

Since $G_k$ is constant, contraction of
\eqref{eq:P-Hessian} gives
\begin{align}
 \frac12G_k^{ij}\nabla_{ij}^2P
 &=G_k^{ij}b_{mi}b_{mj}-G_k^{ij}b_{ij}\notag\\
 &\quad+Ku\bigl(G_k^{ij}\delta_{ij}-G_k^{ij}b_{ij}\bigr).
 \label{eq:sum-P-decomposition}
\end{align}
Equations \eqref{eq:sum-trace-exact} and
\eqref{eq:sum-square-gap}, together with $Ku\geq0$,
prove \eqref{eq:sum-LP}. The maximum principle and
$P=P_0$ on $\partial\Omega$ give $P\leq P_0$.

Suppose that $P$ is constant. Both terms on the right-hand
side of \eqref{eq:sum-P-decomposition} are nonnegative.
The first must therefore vanish. Its decomposition in
\eqref{eq:sum-square-gap} implies $E_1=1$. On the other hand,
\eqref{eq:sum-basic} and the Maclaurin inequality in
$\Gamma_{k-1}$ give
$$
 1\leq E_{k-1}\leq E_1^{k-1}=1.
$$
Hence $E_{k-1}=1$, and \eqref{eq:sum-Ek} gives $E_k=1$.
We now have $b\in\Gamma_k$ and equality in
$E_k^{1/k}\leq E_1$. The equality case in
\eqref{eq:Maclaurin} gives $b=g$.
\end{proof}

\begin{proof}[Proof of Theorem~\ref{thm:sum}]
We first consider $k=1$. In this case,
$$
 E_1=1,\qquad
 H^{ij}=\frac{\delta_{ij}}n,\qquad
 H^{ij}b_{ij}=H^{ij}\delta_{ij}=1.
$$
Identity \eqref{eq:Pohozaev-P} reduces to
$$
 \int_\Omega(P-P_0)V\,d\mu=0.
$$
Lemma~\ref{lem:sum-P} gives $P\leq P_0$, and $V>0$.
Therefore $P=P_0$ throughout $\Omega$. The same lemma
gives $b=g$. The rigidity argument in the final part of
\cite[Section~5]{GaoMaYang} shows that $\Omega$ is a
geodesic ball and $u$ is radial with respect to its center.
This argument does not use $K^2c_1\leq1$.

Assume that $k\geq2$. By \eqref{eq:sum-basic},
$E_1\geq1$. For $w=(u-Kc_1)/V$, we have
$$
 \Delta w+\frac2V\langle\nabla V,\nabla w\rangle
 =\frac{n(E_1-K^2c_1)}V\geq0.
$$
Since $w=0$ and $w_\nu=c_2/V>0$ on $\partial\Omega$,
the strong maximum principle gives $u<Kc_1$ in $\Omega$.
In \eqref{eq:Pohozaev-P}, we now have $H^{ij}=G_k^{ij}$.
Its left-hand side is nonnegative by
\eqref{eq:sum-trace-gap}. Its right-hand side is
nonpositive by Lemma~\ref{lem:sum-P}.
Thus
$$
 \int_\Omega(P-P_0)G_k^{ij}\delta_{ij}V\,d\mu=0.
$$
The weight is strictly positive, so $P=P_0$ in $\Omega$.
Lemma~\ref{lem:sum-P} yields $b=g$.
The rigidity argument in the final part of
\cite[Section~5]{GaoMaYang} shows that $\Omega$ is a
geodesic ball and $u$ is radial with respect to its center.
\end{proof}

\par\noindent{\large\bf Acknowledgements}\par
The authors would like to thank  Professor Jiabin Yin for his helpful discussions and suggestions on this work. This work is supported NSFC grant No. 12571056 and NSF of Henan Province No. 252300421497 and No. 262300421869.

  \vskip4mm
\par\noindent{\large\bf Conflict of interest}\par
The authors declare that they have no conflict of interest.

\end{document}